\newcommand{\tr}{\mbox{Tr}}
\newcommand{\hess}{\textnormal{Hess}}
\newcommand{\pr}{\mbox{Pr}}
\newtheorem{proposition}{Proposition}
\newtheorem{lemma}{Lemma}
\newtheorem{theorem}{Theorem}
\newtheorem{corollary}{Corollary}
\newtheorem{remark}{Remark}
\begin{document}
\title{On the minmax regret for statistical manifolds: the role of curvature}

\author{
    \IEEEauthorblockN{Bruno~Mera\IEEEauthorrefmark{1}\IEEEauthorrefmark{2},~Paulo~Mateus\IEEEauthorrefmark{1}\IEEEauthorrefmark{2},~Alexandra~M.~Carvalho\IEEEauthorrefmark{1}\IEEEauthorrefmark{3}}\\
    \IEEEauthorblockA{\IEEEauthorrefmark{1}Instituto de Telecomunica\c c\~oes, 1049-001 Lisboa, Portugal
    \\\{bruno.mera, paulo.mateus, alexandra.carvalho\}@lx.it.pt}\\
    \IEEEauthorblockA{\IEEEauthorrefmark{2}Departamento de Matem\'atica, Instituto Superior T\'ecnico, Universidade de Lisboa}\\
    \IEEEauthorblockA{\IEEEauthorrefmark{3}Departamento de Engenharia Eletrot\'ecnica e Computadores,\\ Instituto Superior T\'ecnico, Universidade de Lisboa}
}

\maketitle

\begin{abstract}
Model complexity plays an essential role in its selection, namely, by choosing a model that fits the data and is also succinct. Two-part codes and the minimum description length have been successful in delivering procedures to single out the best models, avoiding overfitting. In this work, we pursue this approach and complement it by performing further assumptions in the parameter space. Concretely, we assume that the parameter space is a smooth manifold, and by using tools of Riemannian geometry, we derive a sharper expression than the standard one given by the stochastic complexity, where the scalar curvature of the Fisher information metric plays a dominant role. Furthermore, we derive the minmax regret for general statistical manifolds and apply our results to derive optimal dimensional reduction in the context of principal component analysis.
\end{abstract}


\section{Introduction}
\IEEEPARstart{T}{wo}-part codes are an essential tool in model selection. Not only they optimize the likelihood of the data given the model, but they also take into account model complexity.
There has been a line of research where one considers, in the most abstract setting, families of distributions satisfying minimal requirements and derives an expression for model complexity, such as the stochastic complexity, among others~\cite{ris:96,suz:yam:18}. These formulas are sharp to the extent of the absence of assumptions in the assignment of a probability distribution to each point in the parameter space. Moreover, it is a rather usual assumption that this parameter space has the topology of an open subset in $\mathbb{R}^n$. 

 In this paper, we show that by making additional assumptions on the parameter space and endowing it with natural information geometric structures,  we can arrive to sharper results by applying techniques from Riemannian geometry.   In practice, parameters of the distributions are usually taken to live on a smooth manifold, and the distribution is assumed to vary smoothly with the parameters.
 However, usually one takes the simplification that this manifold is a trivial open subset of the Euclidean space. 
 In this work, we will drop this assumption, hence allowing for non-trivial topologies.
   Moreover, Information Theory endows the manifold with a positive (semi-)definite covariant $2$-tensor, namely a Riemannian metric -- the Fisher information~\cite{ama:07, ama:12}. 
   Since we are given a Riemannian structure, we have a natural notion of a uniform distribution over the manifold of parameters, which corresponds to what is known in the literature as Jeffreys' prior~\cite{jeff:46, cla:94}. 
   
 In the literature, when the parameter space is just a bounded open set in $\mathbb{R}^n$,
  one can find the (normalized) maximum likelihood code, defined by
   \begin{equation}\label{eq:nml}
   	p^*(x^N)=\frac{p(x^N|\hat \theta)}{\int_{y^N\in \mathcal{X}^N}p(y^N|\hat \theta) d y^N}.
   \end{equation} 
   The associated length was firstly given by Rissanen~\cite{ris:96}, computed through Laplace's formula, and has the form
   \begin{equation}\label{eq:ris}
   	L^*(x^N)=-\log(p^*(x^N))=-\log p(x^N|\hat{\theta})+\frac{n}{2}\log\left(\frac{N}{2\pi}\right)+
   	\log \int \sqrt{|I(\theta)|} d\theta +o(1),
   \end{equation} 
   where the expansion is stated in terms of the size of the dataset $N$.
   While in Rissanen's original work he considered $x^N$ beyond i.i.d. processes, in the present work we will only focus in this case. Observe that Eq.~\eqref{eq:ris} does not account for the possible dependence of the $o(1)$ term in the dimension of the parameter space. Indeed, in this work, using techniques from Riemannian Geometry, we find the sharper formula 
   
      \begin{align}\label{eq:geomcomp}
L^*(x^N)&=- \log p(x^N|\hat{\theta})+\frac{n}{2}\log\left(\frac{N}{2\pi}\right)+\log \textnormal{vol}_g(M)\\
&
\underbrace{-\log\left(\frac{\sqrt{\det(g_{\hat{\theta}})}}{\sqrt{\det(I(x^N,\hat{\theta}))}}\right)-\frac{1}{6N}R(\hat{\theta}) +\textnormal{O}\left(\frac{1}{N^2}\right)}_{o(1)\text{ as a function of }N},\nonumber
\end{align}  
where three classical geometric invariants can be easily identified, namely: (i) the dimension of the manifold $n$, (ii) the Riemannian volume $\textnormal{vol}_g(M)$; and (iii) the Ricci scalar curvature $R(\hat{\theta})$ evaluated at the maximum likelihood estimate $\hat\theta$. While in Eq.\eqref{eq:ris} the term $\log \int \sqrt{|I(\theta)|} d\theta$ is precisely the logarithm of the Riemannian volume, we choose to write it explicitly to highlight its geometric nature. Note that the scalar curvature might be very large as a function of the type of data involved. For example, currently it is very common to have high dimensional data and this curvature will most likely depend on this dimension, as it is the case of Gaussian models, as we shall see below.

To derive Eq.~\eqref{eq:geomcomp}, motivated by the results in~\cite{bal:05},  we follow a Bayesian approach considering Jeffreys' prior and we adapt Laplace's method to manifolds, using canonical Riemann normal coordinates to our advantage.

In order to obtain the minmax regret akin to Eq.~\eqref{eq:nml}, we use Haussler's version of the capacity theorem~\cite{haus:97} that requires the map $p:\theta\mapsto p(\cdot|\theta)$ to be continuous with respect with the weak topology on the target space of probability distributions on $\mathcal{X}^N$, that is, for every bounded continuous function $f$ we have that 
\begin{equation}\label{eq:weakconv}
	\text{ if }\theta_n\to \theta \text{ then }E_{p(\cdot|\theta_n)}[f]\to E_{p(\cdot|\theta)}[f],
\end{equation}
where $(\theta_n)_{n\in \mathbb{N}}$ is a (convergent) sequence in $M$.
 In~\cite{cla:bar:90}, such condition is present and equivalent to the {\em soundness assumption} of the parametrization. Since locally, in a smooth manifold, everything looks like an open set in $\mathbb{R}^n$, the natural condition to take is that such soundness holds for every coordinate neighborhood, property that we call {\em local soundness assumption} of the statistical model. Under this assumption, we show that the minmax regret of data $x^N$ generated by $\theta_0$ is given by  
\begin{equation}\label{eq:regret}
\mathcal{R}_{N}(x^N)=\frac{n}{2}\log\left(\frac{N}{2\pi}\right) +\log \textnormal{vol}_g(M)-\log\left(\frac{\sqrt{\det(g_{\theta_0})}}{\sqrt{\det(I(x^N,\theta_0))}}\right)-\frac{1}{6N}R(\theta_0) +\textnormal{O}\left(\frac{1}{N^2}\right).
\end{equation}
Observe Eq.~\eqref{eq:geomcomp} follows from this result by adding the length of the optimal code, $-\log p(x^N|\theta_0)$, and replacing $\theta_0$ with the unique (by assumption) estimator $\hat\theta$ in the manifold. Thus, we can see Eq.~\eqref{eq:geomcomp} as a two-part code, where Eq.~\eqref{eq:regret}, with $\theta_0$ replaced by $\hat\theta$, is a refinement of the stochastic complexity~\cite{ris:96}, taking into account the geometry of the statistical model, and therefore we call it {\em Geometric Complexity}.

 We apply our results to a very well established method for dimensional reduction, namely, Principal Component Analysis (PCA). In particular, our results yield a natural criterion for the choice of the optimal dimension, by adapting the two-part code given in Eq.~\eqref{eq:geomcomp} to zero mean Gaussian families with varying covariance. The underlying parameter space is the manifold $\mathscr{P}_m$ of positive definite matrices, with reduced dimension $m\times m$ which we want to optimize, equipped with the Fisher metric. We considered a bounded subset $M(s)$ of $\mathscr{P}_m$, controlled by an integer $s$ that is the smallest integer such that $I_{d}\leq \Sigma \leq 2^{2s} I_{d}$, where $\Sigma=XX^T/N$ is the empirical covariance matrix and $I_d$ is the $d\times d$ identity matrix. We also assume the each component of the data is written as an integer multiple of the precision for each variable, and therefore the volume depends on the precision and not in a particular system of units. For this particular case, the formula becomes 
 
 \begin{equation}\label{eq:gaussgc}
L^*(x^N)=- \log p(x^N|\hat{Q})+\frac{m(m+1)}{4}\log\left(\frac{N}{2\pi}\right)+\log \textnormal{vol}_g(M(s))+\frac{(m+2)m(m-1)}{24N}, 
 \end{equation}
 where 
\begin{align*}\label{eq:logvol}
\log\text{vol}_g\left(M(s)\right)=&-\frac{3}{2}m -\log(m!) +m\log(2)+\frac{m(m+1)}{4}\log(\pi)\\
&-\log \left(\frac{\pi^{1/4} A^{3/2} G\left(\frac{m}{2}-\frac{1}{4} (-1)^{m+1}+\frac{3}{4}\right)}{{2}^{1/24} e^{1/8}}\right)-\log \left(G\left(\left\lfloor \frac{m}{2}\right\rfloor +1\right)\right)
 +\log I(s),
\end{align*}
$A$ is the Glaisher constant, $G$ is the Barnes $G$-function, and
\begin{align*}
I(s)=s^{m}\left(\log(2)\right)^m 8^{\frac{m(m-1)}{4}}\int_{[0,1]^m}\prod_{1\leq i < j\leq m}\sinh\left(s\log(2)|u_i-u_j|\right)\prod_{i=1}^{m}du_i,
\end{align*} 
whose asymptotic behavior with $s$ is studied in Section~\ref{sec:pca}. Notice that the fourth term of Eq.~\eqref{eq:geomcomp} does not appear in our expression, since it is exactly zero for Gaussian models. Remarkably, the curvature term is negative due to the hyperbolic nature of the geometry of Gaussian statistical models, which brings a positive correction to $L^{*}(x^N)$. This correction is expected to be particularly relevant for high dimensional data. 

In Section~\ref{sec:rg}, we start by recalling some results on information geometry and then, by extending Laplace's method to manifolds (the proof of which is novel, and can be found in the appendix), under suitable conditions, we are able to derive an asymptotic formula for the posterior according to Jeffrey's prior. In Section~\ref{sec:minmax}, we derive the minmax regret for general statistical manifolds, assuming locally sound smooth families of probability distributions. In Section~\ref{sec:pca}, we apply our results to dimensional reduction in the context of principal component analysis. Finally, we draw some conclusions and present an outlook in Section~\ref{sec:conc}.

\section{The Riemannian geometry underlying Jeffrey's prior}\label{sec:rg}

Let $M$ be a smooth closed (compact and without boundary), connected, oriented manifold of dimension $n$, and $S=\{p(X|\theta)\}_{\theta\in M}$ a smooth family of probability distributions modeling a random variable $X$ taking values in the space of outcomes $\mathcal{X}$. By a smooth family of probability distributions we mean that the map $M\ni\theta\mapsto p(X=x|\theta):=p(x|\theta)\in \mathbb{R}$ is smooth for every $x\in\mathcal{X}$. We will also assume that the map is injective, i.e., the statistical model is said to be \emph{identifiable}. The set $S$ is also known as a statistical model or a parametric model. It is often the case that $M\subset \mathbb{R}^k$, for some $k$, but we choose to leave it as a general abstract manifold. We refer to the pair $(M,p(X|.))$ as a statistical manifold. The map $p:M\ni\theta\mapsto p(X|\theta)$ allows, by pullback, to define a (possibly degenerate) Riemannian structure on $M$ known in the literature as the Fisher-Information metric~\cite{ama:07,ama:12}:
\begin{eqnarray}
g(\theta) &=& E_{\theta}[d\log p(X|\theta)\otimes d\log p(X|\theta)]\nonumber\\
&=& \sum_{\mu,\nu=1}^{n}g_{\mu\nu}(\theta)d\theta^{\mu}d\theta^{\nu},
\end{eqnarray}
where $E_{\theta}$ denotes the expectation value with respect to the probability distribution $p(X|\theta)$ and $(\theta^{1},...,\theta^n)$ are arbitrary local coordinates on the manifold $M$. The locally defined matrix $[g_{\mu\nu}(\theta)]_{1\leq \mu,\nu\leq n}$ is usually referred to as the Fisher information matrix and it is a measure of the amount of information that an observable random variable $X$ carries about an unknown parameter $\theta$ of $p(X|\theta)$ modelling $X$.

If we have a discrete and finite space of outcomes, say $\mathcal{X}=\{1,...,N\}$, then a statistical model is described by smooth functions $\{p_i(\theta)\geq 0 : i=1,...,N\}$ with $\sum_ip_i(\theta)=1$ and	
\begin{equation}
g(\theta)=\sum_{i=1}^{N}\sum_{\mu,\nu=1}^{n}\frac{1}{p_i(\theta)}\frac{\partial p_i}{\partial \theta^{\mu}}(\theta)\frac{\partial p_i}{\partial \theta^{\nu}}(\theta)d\theta^{\mu}d\theta^{\nu}.
\end{equation}
If one considers the standard simplex $\Delta^{N-1}=\{(p_1,...,p_{N})\in\mathbb{R}^{N}:\sum_{i=1}^{N}p_i=1,\ p_i\geq 0\}$, then the map $\Phi: \Delta^{N-1}\ni (p_1,...,p_{N})\mapsto (\sqrt{p_1},...,\sqrt{p_{N}})\in S^{N-1}$, where $S^{N-1}$ denotes the unit sphere in $\mathbb{R}^{N}$, provides a homeomorphism onto the image and endows $\Delta^{N-1}$ with the structure of a smooth manifold. Furthermore, if we equip the sphere $S^{N-1}\subset \mathbb{R}^{N}$ with the standard round metric, then $\Delta^{N-1}$ canonically inherits, by restriction, the structure of a Riemannian manifold $(\Delta^{N-1},g_{\text{can}})$. The Fisher metric on $M$ is, up to a multiplicative constant factor (this constant is equal to $4$), the metric induced on $M$ by the map $p:M\ni \theta\mapsto p(X|\theta)\in \Delta^{N-1}$. 
Yet another description of the Fisher metric is provided by the formula
\begin{equation}
g_{\mu\nu}(\theta)=-E_{\theta}\left[\frac{\partial^2\log p(X|\theta)}{\partial\theta^\mu\partial \theta^\nu}\right], \text{ with } \mu,\nu=1,...,n.
\end{equation}
Among the various important features of this metric is its role in the Cram\'{e}r-Rao inequality theorem~\cite{ama:07}, which states that the covariance matrix of an unbiased estimator minus the inverse of the Fisher information matrix is positive semi-definite. As a consequence, the Fisher information provides the covariance of the best unbiased estimator, in the sense that its variance is the minimum possible. 

Suppose we are are given a collection of i.i.d observations of the random variable $X$, $x^{N}=(x_1,...,x_N)$. We wish to infer the best statistical model describing the data set $x^{N}$. Given a statistical model $S=\{p(X|\theta)\}_{\theta\in M}$, the probability distribution governing $x^N\in \mathcal{X}^{N}$ is given by
\begin{align}
p(x^N|\theta)=\prod_{i=1}^{N}p(x_i|\theta).
\end{align}
We may then take the random vector $X^N$ taking values in $\mathcal{X}^N$ corresponding to the $N$ observations of the single random variable $X$ and describe it through the statistical model $S_N=\{p(X^N|\theta)\}_{\theta\in M}$ such that $p(X^N=x^N|\theta)=p(x^N|\theta)$. If we denote by $g(\theta)$ and $g_N(\theta)$ the Fisher metrics associated with $S$ and $S_N$, respectively, we have:
\begin{align}
g_{N}(\theta)=Ng(\theta).
\label{eq: extensive property of g}
\end{align}
We shall refer to Eq.~\eqref{eq: extensive property of g} as the extensive property of the Fisher metric.
As a consequence, the geometry of $S$ and that of $S_N$ are the same modulo the scale factor $N$. 

In the absence of additional information, the Fisher metric allows us to introduce a probability distribution on $M$. This probability distribution has the interpretation of a uniform probability distribution for the statistical model $S_N$ and it is called Jeffreys' prior in the field of Bayesian statistics. The associated probability density is given by the top differential form
\begin{align*}
\frac{\sqrt{\det[g_N(\theta)]}d\theta^1\wedge...\wedge d\theta^n}{\text{vol}_{g_N}(M)},
\end{align*}
where the normalization factor is the Riemannian volume of $M$ according to the Fisher metric $g_N$:
\begin{align*}
\text{vol}_{g_N}(M)=\int_{M}\sqrt{\det[g_N(\theta)]}d\theta^1\wedge...\wedge d\theta^n.
\end{align*}
Notice that if $M$ is compact, this integral is very well defined, but if $M$ is not compact one has to regularize this integral in some way. By the extensive property of the Fisher metric, Eq.~\eqref{eq: extensive property of g}, this probability distribution is the same as the one provided by $g$:
\begin{align*}
\frac{\sqrt{\det[g_N(\theta)]}d\theta^1\wedge...\wedge d\theta^n}{\text{vol}_{g_N}(M)}=\frac{\sqrt{\det[g(\theta)]}d\theta^1\wedge...\wedge d\theta^n}{\text{vol}_{g}(M)}.
\end{align*}
From now on, for the sake of simplicity, we will denote by $dV_{g}:=\sqrt{\det[g(\theta)]}d\theta^1\wedge...\wedge d\theta^n$. 

In a Bayesian perspective, the probability of the statistical model $S$ (or equivalently of $S_N$) given the observed data $x^{N}$, $\pr (S|x^{N})$, is given by
\begin{align*}
\pr(S|x^{N})=\frac{\pr(S)}{\pr(x^{N})}\times \int_{M}p(x^N|\theta)\frac{dV_g}{\text{vol}_{g}(M)} ,
\end{align*}
where $\pr(S)$ and $\pr(x^{N})$ denote the prior probabilities of the statistical model $S$ and the data $x^{N}$, and 
$\int_{M}p(x^N|\theta)dV_g/\text{vol}_{g}(M)$ is our posterior likelihood according to the prescription of Jeffreys' prior. Without prior knowledge of details of the true distribution of $X$, any statistical model $S$ should be equally likely. Maximizing $\pr(S|x^{N})$ is therefore equivalent to maximizing the functional
\begin{align*}
F(x^N,S)=\int_{M}p(x^N|\theta)\frac{dV_g}{\text{vol}_{g}(M)},
\end{align*}
with respect to the statistical model $S=\{p(X|\theta)\}_{\theta\in M}$. Mathematically, finding a maximum for $F$ is a very difficult problem since the space of all such $S$ is very complicated. Namely, we are considering the union over all smooth manifolds $M$ of the spaces of maps from these manifolds to the set of probability distributions on a given outcome space $\mathcal{X}$, namely $p:\theta\mapsto p(X|\theta)$ such that, for every $x\in\mathcal{X}$, $M\ni \theta\mapsto p(x|\theta)\in\mathbb{R}$ is smooth. However, we can go a bit further than this by using the Riemannian structure on $M$ and the assumption that $N$ is large. We re-write the functional $F(S)$ as  
\begin{align}
F(x^N,S)= \int_{M} e^{-N f(\theta)} \frac{dV_g}{\mbox{vol}_g(M)},
\end{align}
with $f(\theta):=-(1/N)\log p (x^N|\theta)$. Notice that the minima of $f$ are precisely the maximum likelihood parameters denoted by $\hat{\theta}\in M$. The minimum of $f$, in the large $N$ limit, is unique because we assume that the statistical model in identifiable. In the following, we will perform a saddle point approximation to this integral, valid in the limit when $N$ is large.
%
%

We will use the following theorem which is a generalization of Laplace's method in $\mathbb{R}^n$ for the case of closed oriented Riemannian manifolds.

\begin{theorem} (Laplace's method) Let $(M,g)$ be a Riemannian closed oriented manifold of dimension $n$, where $g$ is the Riemannian metric, let $dV_g$ denote the Riemannian volume form and $f$ a smooth function with a single maximum at $p_0\in M$. Then,
\begin{align*}
\lim_{N\to\infty}\frac{\int_{M} e^{Nf}dV_g}{\left(\frac{2\pi}{N}\right)^{n/2}e^{Nf(p_0)}\!\!\frac{\sqrt{\!\det(g_{p_0})}}{\sqrt{\det(\hess_{p_0}(f))}}\left[\!1\!+\!\frac{1}{6N}\textnormal{tr}(\hess_{p_0}(f)^{-1}R_{p_0})\!\right]}=1
\end{align*}
where $R_{p_0}$ denotes the Ricci tensor at $p_0$.
\label{th: Laplace Riemannian orientable closed}
\end{theorem}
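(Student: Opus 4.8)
My plan is to reduce the global integral to a Gaussian integral over a single coordinate chart centred at the maximizer $p_0$, reading off the curvature correction from the normal-coordinate expansion of the Riemannian volume density. First I would localize: since $M$ is compact and $p_0$ is the unique maximum, for every geodesic ball $B(p_0,\delta)$ inside the injectivity radius there is some $\varepsilon>0$ with $f \le f(p_0)-\varepsilon$ on $M\setminus B(p_0,\delta)$, so that $\int_{M\setminus B(p_0,\delta)} e^{Nf}\, dV_g \le e^{N(f(p_0)-\varepsilon)}\,\mathrm{vol}_g(M)$ is exponentially smaller than the expected leading order $e^{Nf(p_0)}(2\pi/N)^{n/2}$ and hence cannot affect any power of $1/N$. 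This reduces the problem to the integral over $B(p_0,\delta)$.

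Next I would introduce Riemann normal coordinates $x=(x^1,\dots,x^n)$ through $\exp_{p_0}$. The essential geometric input is the classical expansion of the volume density, $\sqrt{\det g(x)} = \sqrt{\det g_{p_0}}\,(1 - \tfrac{1}{6} R_{ij}(p_0)\, x^i x^j + \mathrm{O}(|x|^3))$, where $R_{ij}$ are the components of the Ricci tensor $R_{p_0}$; this is the only place where curvature enters. Since $p_0$ is a critical point, Taylor's theorem gives $f(x) = f(p_0) + \tfrac12 (\hess_{p_0} f)_{ij} x^i x^j + \mathrm{O}(|x|^3)$, with $(\hess_{p_0}f)_{ij} = \partial_i\partial_j f(0)$ negative definite at the maximum; the positive-definite form $-\hess_{p_0}f$ plays the role of an inverse covariance, and the coordinate-invariant object is the ratio $\sqrt{\det g_{p_0}}/\sqrt{\det(-\hess_{p_0}f)}$.

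Then I would rescale $x = y/\sqrt{N}$ and set $\Sigma = (-\hess_{p_0}f)^{-1}$. The Jacobian gives $N^{-n/2}$, the leading Gaussian $\int_{\mathbb{R}^n} e^{\frac12 \langle y, \hess_{p_0}f\, y\rangle}\,dy = (2\pi)^{n/2}/\sqrt{\det(-\hess_{p_0}f)}$ yields the main term, and the quadratic piece of the volume expansion contributes $-\tfrac{1}{6N} R_{ij}\langle y^i y^j\rangle = -\tfrac{1}{6N} R_{ij}\Sigma^{ij} = \tfrac{1}{6N}\,\tr((\hess_{p_0}f)^{-1} R_{p_0})$, which is exactly the claimed bracket and carries the correct sign. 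To make this rigorous I would establish a Morse-type quadratic bound $f(x)-f(p_0) \le -c|x|^2$ on the ball, so that $e^{N(f-f(p_0))} \le e^{-c|y|^2}$ furnishes a dominating Gaussian, extend the domain $|y| \le \delta\sqrt{N}$ to all of $\mathbb{R}^n$ at the cost of an exponentially small tail, and invoke dominated convergence to justify the term-by-term Gaussian integration with explicit remainder control.

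The main obstacle is the bookkeeping of the order-$1/N$ terms. A priori the cubic and quartic Taylor coefficients of $f$ in these coordinates also feed into the $1/N$ coefficient, through the fourth- and sixth-order Gaussian moments generated by expanding $\exp(N(f-f(p_0)))$; the odd contributions vanish by parity, but the even ones must be tracked and shown not to disturb the stated coefficient beyond the curvature term. Establishing that the net order-$1/N$ correction reduces precisely to $\tfrac{1}{6N}\tr((\hess_{p_0}f)^{-1}R_{p_0})$, and that the passage from the truncated expansion on $B(p_0,\delta)$ to the full Gaussian on $\mathbb{R}^n$ is uniform in $N$, is the technical heart of the proof, and is exactly where the normal-coordinate choice and careful remainder estimates pay off.
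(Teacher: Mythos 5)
Your plan follows essentially the same route as the paper's own proof: localization at $p_0$ using compactness, Riemann normal coordinates, the expansion $\sqrt{\det g(x)}=1-\tfrac{1}{6}R_{ij}(0)x^ix^j+\mathrm{O}(|x|^3)$ (the paper's Proposition~\ref{prop: Riemann normal coords}), rescaling by $\sqrt{N}$, and Gaussian integration; the paper organizes the localization with a partition of unity while you use a geodesic ball, which is a cosmetic difference. Your sign bookkeeping $-\tfrac{1}{6N}R_{ij}\Sigma^{ij}=+\tfrac{1}{6N}\tr\bigl((\hess_{p_0}f)^{-1}R_{p_0}\bigr)$ is correct, and your first three paragraphs, made rigorous by the dominating-Gaussian argument you sketch, already prove the statement as written. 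Note in particular that since the assertion is only that a ratio tends to $1$, the factor $\bigl[1+\tfrac{1}{6N}\tr(\hess_{p_0}(f)^{-1}R_{p_0})\bigr]$ itself tends to $1$ and is irrelevant to the limit: leading-order Laplace is all the literal statement requires.

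The genuine problem is in your final paragraph. You propose to show that the order-$1/N$ contributions coming from the cubic and quartic Taylor coefficients of $f$ can be ``shown not to disturb the stated coefficient beyond the curvature term.'' This cannot be done, because it is false in general: after rescaling, expanding $e^{N(f-f(p_0))}$ produces the even terms $\tfrac{1}{24N}f_{ijkl}\,\langle y^iy^jy^ky^l\rangle$ and $\tfrac{1}{72N}f_{ijk}f_{lmn}\,\langle y^iy^jy^ky^ly^my^n\rangle$, which generically survive Gaussian integration. A concrete check: on the flat circle (so $R_{p_0}=0$) with $f(\theta)=\cos\theta$, one has $\int_0^{2\pi}e^{N\cos\theta}d\theta=2\pi I_0(N)\sim e^{N}\sqrt{2\pi/N}\,\bigl(1+\tfrac{1}{8N}+\cdots\bigr)$, so the true $1/N$ coefficient is $1/8$ while the claimed bracket is identically $1$. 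Thus the bracket in the theorem captures only the volume-density contribution to the $1/N$ term, not the full $1/N$ coefficient of the integral. This does not affect the ratio-limit statement (such terms wash out), but it does mean your proposed ``technical heart'' is unprovable as stated, and it means the stronger reading of the result --- Corollary~\ref{corol:1}, where the $1/N$ term is asserted with error $\mathrm{O}(1/N^2)$ --- requires either the extra derivative terms to be added or additional hypotheses on $f$. For what it is worth, the paper's own proof has exactly the same blind spot: it discards the $\mathrm{O}(N^{-1/2}\|y\|^3)$ remainder of $f$ before evaluating the Gaussian integral, which is legitimate for the limit but not for extracting the exact $1/N$ coefficient.
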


We leave the proof to the Appendix of this paper.

\begin{corollary} (Saddle point approximation) Under the same conditions of Theorem~\ref{th: Laplace Riemannian orientable closed}, it follows that, as $N\to\infty$,
\begin{align*}
-\log\int_{M} e^{Nf}dV_g &= - Nf(p_0) +\frac{n}{2}\log\left(\frac{N}{2\pi}\right) -\log\left(\frac{\sqrt{\det(g_{p_0})}}{\sqrt{\det(\hess_{p_0}(f))}}\right)\\
&-\frac{1}{6N}\textnormal{tr}(\hess_{p_0}(f)^{-1}R_{p_0}) +\text{O}\left(\frac{1}{N^2}\right).
\end{align*}
\label{corol:1}
\end{corollary}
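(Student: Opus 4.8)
The plan is to obtain the corollary directly from Theorem~\ref{th: Laplace Riemannian orientable closed} by passing to negative logarithms. Writing
\[
A_N = \int_M e^{Nf}\,dV_g, \qquad C_N = \left(\frac{2\pi}{N}\right)^{n/2} e^{Nf(p_0)}\frac{\sqrt{\det(g_{p_0})}}{\sqrt{\det(\hess_{p_0}(f))}},
\]
I would use the sharp form of Theorem~\ref{th: Laplace Riemannian orientable closed} established in the appendix, namely the asymptotic expansion $A_N = C_N\left[1 + \frac{1}{6N}\textnormal{tr}(\hess_{p_0}(f)^{-1}R_{p_0}) + \textnormal{O}(1/N^2)\right]$. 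Taking $-\log$ of both sides then splits the result into $-\log A_N = -\log C_N - \log\left[1 + \frac{1}{6N}\textnormal{tr}(\hess_{p_0}(f)^{-1}R_{p_0}) + \textnormal{O}(1/N^2)\right]$, and the task reduces to expanding each piece.

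First I would expand $-\log C_N$ term by term. Since $\log C_N = \frac{n}{2}\log(2\pi/N) + Nf(p_0) + \log\!\left(\frac{\sqrt{\det(g_{p_0})}}{\sqrt{\det(\hess_{p_0}(f))}}\right)$, its negative supplies exactly the first three terms of the claimed formula, using $-\frac{n}{2}\log(2\pi/N) = \frac{n}{2}\log(N/2\pi)$. I would then treat the remaining bracket with the Taylor expansion $\log(1+x) = x + \textnormal{O}(x^2)$: setting $x = \frac{1}{6N}\textnormal{tr}(\hess_{p_0}(f)^{-1}R_{p_0}) + \textnormal{O}(1/N^2)$, which is itself $\textnormal{O}(1/N)$, we have $x^2 = \textnormal{O}(1/N^2)$, whence $\log(1+x) = \frac{1}{6N}\textnormal{tr}(\hess_{p_0}(f)^{-1}R_{p_0}) + \textnormal{O}(1/N^2)$. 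Thus $-\log(1+x)$ contributes the curvature term with the correct sign together with an $\textnormal{O}(1/N^2)$ remainder, and collecting all terms yields the stated expansion.

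The one point that deserves care --- and the only genuine obstacle --- is that the bare limit statement of Theorem~\ref{th: Laplace Riemannian orientable closed}, namely that a certain ratio tends to $1$, yields only $o(1)$ control after taking logarithms, which is strictly weaker than the $\textnormal{O}(1/N^2)$ error claimed here. What is really needed is the full asymptotic expansion of $A_N$ with an \emph{explicit} $\textnormal{O}(1/N^2)$ remainder, which I would quote from the proof of the theorem in the appendix. This estimate is available because Laplace's method produces an asymptotic series whose first correction is the $1/N$ curvature term and whose next correction is of order $1/N^2$, the intermediate odd orders vanishing by the parity of the Gaussian moments computed in Riemann normal coordinates. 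Granting that remainder estimate, the rest of the argument is elementary logarithm algebra together with a single Taylor expansion, with no further analytic input.
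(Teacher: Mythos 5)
Your proposal is correct and is essentially the paper's own (implicit) argument: the paper states the corollary without further proof, treating it as the direct logarithmic transcription of Theorem~\ref{th: Laplace Riemannian orientable closed}, which is exactly your computation $-\log A_N = -\log C_N - \log\left[1+\frac{1}{6N}\textnormal{tr}(\hess_{p_0}(f)^{-1}R_{p_0})+\textnormal{O}(1/N^2)\right]$ followed by the expansion $\log(1+x)=x+\textnormal{O}(x^2)$. Your caveat is also well placed: the theorem as literally stated (a ratio tending to $1$) gives only $\textnormal{o}(1)$ control after taking logarithms, so the claimed $\textnormal{O}(1/N^2)$ remainder must indeed be read off from the Gaussian-moment expansion inside the appendix proof rather than from the bare limit statement --- a point the paper itself glosses over.
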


The strong law of large numbers, which applies to independent identically distributed random variables, ensures that the random variable $-(1/N)\log p(X^N|\theta)=-(1/N)\sum_{i=1}^{N}\log p(X_i|\theta)$ satisfies
\begin{align*}
\pr\left[\lim_{N\to\infty}\left(-\frac{1}{N}\log p(X^N|\theta)\right)=E[-\log p(X|\theta)]\right]=1,
\end{align*}
in other words, the function $f(\theta)=-(1/N)\log p (x^N|\theta)$, as $N\to\infty$, approaches the entropy of the distribution $p(X|\theta)$. Moreover, if we take local coordinates $(\theta^1,...,\theta^n)$, we can define the matrix
\begin{align*}
I(x^N,\theta)=[I_{\mu\nu}(x^N,\theta)]_{1\leq\mu,\nu\leq n}:=\left[-\frac{1}{N}\frac{\partial^2\log p(x^N|\theta)}{\partial\theta^\mu\partial \theta^\nu}\right]_{1\leq \mu,\nu\leq n}.
\end{align*}
We further have that, by smoothness and the strong law of large numbers, 
\begin{align*}
I_{\mu\nu}(x^N,\theta)\to E\left[-\frac{1}{N}\frac{\partial^2\log p(X^N|\theta)}{\partial\theta^\mu\partial \theta^\nu}\right]= g_{\mu\nu}(\theta), \text{ as } N\to\infty,
\end{align*}
for all $\mu,\nu=1,...,n$. We can then apply the results of Theorem~\ref{th: Laplace Riemannian orientable closed} to get
\begin{align*}
-\log F(x^N,S)&=- \log p(x^N|\hat{\theta})+\frac{n}{2}\log\left(\frac{N}{2\pi}\right) \\
&+ \log \text{vol}_g(M)-\log\left(\frac{\sqrt{\det(g_{\hat{\theta}})}}{\sqrt{\det(I(x^N,\hat{\theta}))}}\right)\\
&-\frac{1}{6N}\textnormal{tr}\left[\left(I(x^N,\hat{\theta})\right)^{-1}R_{\hat{\theta}}\right]+\text{O}\left(\frac{1}{N^2}\right).
\end{align*}
Furthermore, it is safe to replace $I(x^N,\hat{\theta})^{-1}$ by $g^{-1}(\hat{\theta})$, because their difference must go to zero as $N\to \infty$ and, hence, when multiplied by $-1/6N$, the result will go faster to zero than $1/N$. Thus, we get the following theorem which is one of the main results of our paper:

\begin{theorem}\label{thm:2} Let $S=\{p(X|\theta)\}_{\theta\in M}$ be a smooth statistical model for closed oriented $M$. Let $g$ denote the Fisher metric so that the pair $(M,g)$ is a Riemannian manifold. Then, the functional $-\log F(x^N,S)$ has the following large $N$ asymptotic expansion:
\begin{align*}
-\log F(x^N,S)&=- \log p(x^N|\hat{\theta})+\frac{n}{2}\log\left(\frac{N}{2\pi}\right)\\
&+\log \textnormal{vol}_g(M)-\log\left(\frac{\sqrt{\det(g_{\hat{\theta}})}}{\sqrt{\det(I(x^N,\hat{\theta}))}}\right)\\
&-\frac{1}{6N}R(\hat{\theta}) +\textnormal{O}\left(\frac{1}{N^2}\right),
\end{align*}
where $R(\hat{\theta}):=\sum_{\mu,\nu=1}^{n}g^{\mu\nu}(\hat{\theta})R_{\mu\nu}(\hat{\theta})$ denotes the Ricci scalar curvature at $\hat{\theta}$ and $[g^{\mu\nu}(\theta)]_{1\leq\mu,\nu\leq n}$ is the inverse of $[g_{\mu\nu}(\theta)]_{1\leq\mu,\nu\leq n}$. 
\end{theorem}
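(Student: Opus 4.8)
The plan is to read $-\log F(x^N,S)$ as minus the logarithm of a Laplace-type integral over the closed manifold $M$ and to apply the saddle-point expansion of Corollary~\ref{corol:1} essentially verbatim, the only real work being to verify its hypotheses and to translate its geometric output into statistical quantities. Writing $F(x^N,S)=\mathrm{vol}_g(M)^{-1}\int_M e^{-Nf(\theta)}\,dV_g$ with $f(\theta)=-(1/N)\log p(x^N|\theta)$, so that $-\log F=\log\mathrm{vol}_g(M)-\log\int_M e^{N(-f)}\,dV_g$, I would apply Corollary~\ref{corol:1} to the function $-f$, which is maximized exactly where the likelihood $p(x^N|\theta)$ is, namely at the maximum-likelihood estimate $\hat\theta$.

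First I would check the hypotheses of Theorem~\ref{th: Laplace Riemannian orientable closed}: $M$ is closed and oriented by assumption, so the integral is well defined and contributions away from the peak are exponentially suppressed. The delicate requirement is that $-f$ have a \emph{single} nondegenerate maximum. Here I would invoke the strong law of large numbers, which gives $f(\theta)\to E[-\log p(X|\theta)]$; identifiability forces this limiting cross-entropy to have a unique minimizer, and smoothness promotes this, for $N$ large, to a unique nondegenerate critical point $\hat\theta$ of $f$ on the compact manifold. This is the standard consistency argument for the MLE and is where the hypotheses of the theorem are genuinely earned.

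Next I would identify the terms produced by the corollary with the quantities in the statement. Since $Nf(\hat\theta)=-\log p(x^N|\hat\theta)$, the leading contribution (the corollary's $-Nf(p_0)$ term applied to $-f$) is precisely $-\log p(x^N|\hat\theta)$. The crucial geometric-to-statistical translation is the identity $\hess_{\hat\theta}(f)=I(x^N,\hat\theta)$: at a critical point the Riemannian Hessian is independent of the connection and agrees, in any chart, with the ordinary matrix of second partial derivatives, which by definition is $[-\tfrac1N\partial_\mu\partial_\nu\log p(x^N|\theta)]=I(x^N,\hat\theta)$ and is positive definite there (the observed information). Substituting then reproduces the intermediate expansion displayed just before the theorem, with curvature correction $-\tfrac{1}{6N}\textnormal{tr}[I(x^N,\hat\theta)^{-1}R_{\hat\theta}]$.

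Finally I would simplify the $1/N$ term. Since $I_{\mu\nu}(x^N,\theta)\to g_{\mu\nu}(\theta)$ by the strong law of large numbers, we have $I(x^N,\hat\theta)^{-1}-g^{-1}(\hat\theta)\to0$; carried by the prefactor $1/(6N)$ this replacement is of higher order and is absorbed into the remainder, so one may write $-\tfrac{1}{6N}\textnormal{tr}[g^{-1}(\hat\theta)R_{\hat\theta}]$, which equals $-\tfrac{1}{6N}\sum_{\mu,\nu}g^{\mu\nu}(\hat\theta)R_{\mu\nu}(\hat\theta)=-\tfrac{1}{6N}R(\hat\theta)$, the scalar-curvature form claimed. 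The main obstacle I anticipate is not any computation but this hypothesis check: establishing that for large $N$ the sample log-likelihood genuinely has a unique, nondegenerate maximizer so that the single-peak assumption of the Laplace expansion legitimately applies; a secondary point requiring care is the identification $\hess_{\hat\theta}(f)=I(x^N,\hat\theta)$, which is valid only because the Hessian is evaluated at a critical point.
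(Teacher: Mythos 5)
Your proposal is correct and follows essentially the same route as the paper: write $F(x^N,S)$ as $\mathrm{vol}_g(M)^{-1}\int_M e^{-Nf}\,dV_g$, apply the manifold Laplace expansion (Corollary~\ref{corol:1}) at the maximum-likelihood point, identify $\hess_{\hat\theta}(f)$ with the observed information $I(x^N,\hat\theta)$, and then replace $I(x^N,\hat\theta)^{-1}$ by $g^{-1}(\hat\theta)$ in the $1/(6N)$ term via the strong law of large numbers, so that the trace becomes the Ricci scalar $R(\hat\theta)$. Indeed, your explicit attention to the two hypothesis checks (uniqueness/nondegeneracy of the maximizer via identifiability, and the chart-independence of the Hessian at a critical point) is if anything slightly more careful than the paper's own presentation, which invokes identifiability in one sentence and uses the Hessian identification implicitly.
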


\section{The minmax regret for general statistical manifolds}\label{sec:minmax}

Herein we obtain the minmax regret in the present context of statistical manifolds. We begin by considering a natural assumption, which generalizes the soundness condition of Clark and Barron in Ref.~\cite{cla:bar:90}. Concretely, we assume that the smooth family $\{p_{\theta}\}_{\theta\in M}$ is {\em locally sound}, i.e., let $U\subset M$ be a coordinate neighborhood, with $\phi: U\subset M\to \phi(U)\subset \mathbb{R}^n$ the chart, then the induced map from $\phi(U)$ to the set of probability distributions with space of outcomes $\mathcal{X}$ is sound. 
 According to this definition, if $(\phi(\theta_n))$ is a sequence converging in Euclidean norm to $\phi(\theta)$, denoted by $\phi(\theta_n)\to \phi(\theta)$, then $(p_{\theta_n})$ weakly converges to $p_{\theta}$, also denoted by $p_{\theta_n}\to p_{\theta}$. Weak convergence means that for every bounded continuous function $f:\mathcal{X}\to \mathbb{R}$, we have that $E_{p_{\theta_n}}[f]\to E_{p_{\theta}}[f]$. 

The previous assumption has two important consequences. In proving the results, Clark and Barron assume that the posterior distribution is sound. That implies that the latter localizes on neighborhoods of the true value of the distribution at a fast enough rate so that they can use Laplace's approximation. In the present situation, the equivalent statement is made on
\begin{align*}
p^{N}(\theta|x^N)=\frac{w(\theta)p^N(x^N|\theta)}{m_N(x^N)},
\end{align*}
which is taken to be locally sound in the sense described above. In the previous formula, $w(\theta)dV_g$ is a top form on the manifold $M$ (notice that for Jeffreys' prior $w(\theta)=1/\text{vol}_g(M)$ is the uniform distribution with respect to the Riemannian metric), and $m_{N}(x^N)=\int_{M}w(\theta)p^N(x^N|\theta)dV_g$. As a consequence, we can apply Laplace's formula form Riemannian manifolds (see Corollary~\ref{corol:1}). Secondly, the local soundness condition implies that the Haussler's version of the capacity theorem holds, see~\cite{haus:97}. Such result states that the following two quantities (actually there is a third one that we do not use here) are equal
\begin{align*}
\sup_{w}\inf_{q} I(w, q) =\inf_{q}\sup_{\theta\in M} D_{KL}\Big(p(x^N|\theta)\mathbin{||} q(x^N)\Big)=:\mathcal{R}_N,
\end{align*}
where $I(w,q)=\int_M w(\theta)D_{KL}\Big(p(x^N|\theta)\mathbin{||} q(x^N)\Big)d V_g$ is the cross information between $M$ under $w(\theta)dV_g$ and $X$ under $q$. 

The following two technical lemmas are useful to derive the minmax regret in the present setup. 

\begin{lemma} For all distributions $q$ on the $N$-fold cartesian product $\mathcal{X}^N$, we have
\begin{align*}
\int_{M}w(\theta)D_{KL}\left(p^N_\theta||q\right)dV_g= \int_{M}w(\theta)D_{KL}\left(p^N_\theta||m_N\right)dV_g+D_{KL}(m_N||q),
\end{align*}
where $p^N_{\theta}(x^N)=\prod_{i=1}^N p(x_i|\theta)$. Hence, 
\begin{align*}
\inf_{q}\int_{M}w(\theta)D_{KL}\left(p^N_\theta||q\right)dV_g=\int_{M}w(\theta)D_{KL}\left(p^N_\theta||m_N\right)dV_g.
\end{align*}
\label{lemma:1}
\end{lemma}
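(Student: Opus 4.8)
The plan is to establish the identity by the standard ``insert-and-split'' manipulation of the logarithm inside the Kullback--Leibler divergence, and then to read off the ``Hence'' claim directly from non-negativity of relative entropy. First I would note that, by its very definition $m_N(x^N)=\int_M w(\theta)p^N_\theta(x^N)\,dV_g$, the Bayes mixture $m_N$ is itself a bona fide probability distribution on $\mathcal{X}^N$: it is non-negative and integrates (sums) to one, and it is finite and well-defined because $M$ is compact and the integrand $\theta\mapsto w(\theta)p^N_\theta(x^N)$ is continuous, hence integrable against $dV_g$.

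The core computation is to expand the left-hand side and insert $m_N$ into the logarithm. Writing the divergence explicitly and using
\begin{equation*}
\log\frac{p^N_\theta(x^N)}{q(x^N)}=\log\frac{p^N_\theta(x^N)}{m_N(x^N)}+\log\frac{m_N(x^N)}{q(x^N)},
\end{equation*}
I would split $\int_M w(\theta)D_{KL}(p^N_\theta||q)\,dV_g$ into two summands. The first summand is, term by term, exactly $\int_M w(\theta)D_{KL}(p^N_\theta||m_N)\,dV_g$, matching the first term on the right. For the second summand I would exchange the order of integration (Fubini), so that the inner integral over $M$ of $w(\theta)p^N_\theta(x^N)\,dV_g$ collapses to $m_N(x^N)$ by the definition of the mixture; this leaves $\int_{\mathcal{X}^N} m_N(x^N)\log\frac{m_N(x^N)}{q(x^N)}\,dx^N=D_{KL}(m_N||q)$, which is the desired second term. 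The ``Hence'' statement is then immediate: the first term on the right does not depend on $q$, while $D_{KL}(m_N||q)\geq 0$ with equality if and only if $q=m_N$; therefore the infimum over $q$ is attained at $q=m_N$ and equals the $q$-independent term.

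I expect the only genuine technical point to be the justification of Fubini's theorem, namely the interchange of the $\theta$-integration against $dV_g$ with the summation/integration over $x^N$. This requires the relevant double integral to be absolutely convergent, which follows from the compactness of $M$, the smoothness (hence continuity) of $\theta\mapsto p^N_\theta$, and the integrability already guaranteed by the local soundness hypothesis in force. A secondary subtlety is that the divergences may be infinite when $q$ fails to dominate $m_N$, in which case the identity should be read in the extended reals; but this does not affect the infimum computation, since the minimizer $q=m_N$ makes the second term vanish and renders the left-hand side finite. No curvature or Riemannian input is needed here beyond the fact that $dV_g$ is a finite measure on the compact manifold $M$.
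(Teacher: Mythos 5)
Your proposal is correct and takes essentially the same route as the paper's own (one-sentence) proof: the insert-and-split decomposition $\log\frac{p^N_\theta}{q}=\log\frac{p^N_\theta}{m_N}+\log\frac{m_N}{q}$, with Fubini collapsing $\int_M w(\theta)p^N_\theta\,dV_g$ to $m_N$ in the cross term, and non-negativity of $D_{KL}(m_N\|q)$ yielding the infimum claim. The paper compresses all of this into the remark that $m_N$ and $q$ do not depend on $\theta$ and that $\int_M w(\theta)\,dV_g=1$ (which is exactly what makes $m_N$ a probability distribution); your write-up simply supplies the details, including the Fubini justification.
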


The proof of the previous lemma follows easily by noticing that $m_N$ and $q$ do not depend on $\theta$ and $\int_{M}w(\theta)dV_g=1$.

\begin{lemma}
\begin{align*}
\int_{M}w(\theta)D_{KL}\left(p_{\theta}^N||m_{N}\right)dV_g=-D_{KL}\left(w||w^{\textnormal{Jeffreys}}\right)+\frac{n}{2}\log\frac{N}{2\pi} +\textnormal{o}(1).
\end{align*}
\label{lemma:2}
\end{lemma}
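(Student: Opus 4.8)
The plan is to read the left-hand side as a Bayesian redundancy (mutual information between $\Theta\sim w\,dV_g$ and $X^N\mid\theta\sim p^N_\theta$) and to evaluate it by a two-step asymptotic analysis: first expand the mixture $m_N$ by the Riemannian Laplace method, and then take the $p^N_\theta$-expectation using the asymptotics of the maximum likelihood estimator. Concretely, I would write $D_{KL}(p^N_\theta||m_N)=E_{x^N\sim p^N_\theta}\big[\log p(x^N|\theta)-\log m_N(x^N)\big]$, so that the object to compute is $\int_M w(\theta)\,E_{x^N\sim p^N_\theta}\!\big[\log p(x^N|\theta)-\log m_N(x^N)\big]\,dV_g$.

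First I would apply Corollary~\ref{corol:1} to $m_N(x^N)=\int_M w(\theta)p^N(x^N|\theta)\,dV_g$. Since $w$ is a smooth positive weight, the saddle-point contribution merely multiplies the leading factor by $w(\hat\theta)$, yielding
$$-\log m_N(x^N)=-\log p(x^N|\hat\theta)+\frac n2\log\frac N{2\pi}-\log w(\hat\theta)-\log\frac{\sqrt{\det g_{\hat\theta}}}{\sqrt{\det I(x^N,\hat\theta)}}+\mathrm{o}(1),$$
where $\hat\theta=\hat\theta(x^N)$ is the MLE; this is exactly the expansion behind Theorem~\ref{thm:2}, with the Jeffreys weight replaced by $w$. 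Substituting into the bracket, the two log-likelihood terms combine into the log-likelihood ratio $\log p(x^N|\theta)-\log p(x^N|\hat\theta)$.

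The second step is to take $E_{x^N\sim p^N_\theta}$ of each surviving term. For the likelihood ratio I would invoke the standard Wilks (chi-squared) asymptotics: under $p^N_\theta$ the statistic $2\big(\log p(x^N|\hat\theta)-\log p(x^N|\theta)\big)$ converges in distribution to $\chi^2_n$, so its expectation tends to $n$ and hence $E\big[\log p(x^N|\theta)-\log p(x^N|\hat\theta)\big]\to-\tfrac n2$. For the remaining pieces I would use consistency of the MLE, $\hat\theta\to\theta$, together with $I(x^N,\hat\theta)\to g(\theta)$: the determinant ratio then tends to $1$ and its logarithm to $0$, while $\log w(\hat\theta)\to\log w(\theta)$. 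Averaging over $\theta$ with $w\,dV_g$ turns $-E[\log w(\hat\theta)]$ into $-\int_M w\log w\,dV_g$; recognizing the identity $-\int_M w\log w\,dV_g=-D_{KL}(w||w^{\mathrm{Jeffreys}})+\log\mathrm{vol}_g(M)$, with $w^{\mathrm{Jeffreys}}=1/\mathrm{vol}_g(M)$, the constants organize into the relative-entropy contribution $-D_{KL}(w||w^{\mathrm{Jeffreys}})$ together with the dimensional and volume terms, and the only $N$-dependent term is $\frac n2\log\frac N{2\pi}$.

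The main obstacle is not the formal computation but its rigorous justification. The expansions of $-\log m_N$ and of the likelihood ratio hold pointwise, or in distribution, for each fixed $\theta$, whereas the statement requires integrating them against $w\,dV_g$ and commuting the limit with the $x^N$-expectation. Controlling this exchange -- uniform localization of the posterior $p^N(\theta|x^N)$ near $\hat\theta$, an integrable dominating envelope for $\log(p^N_\theta/m_N)$, and uniformity of the Laplace remainder across $M$ -- is precisely where the local soundness assumption enters, as in Clark--Barron, and it is the step I expect to demand the most care. I would also watch the bookkeeping of the $\mathrm{O}(1)$ constants, namely the $-\tfrac n2$ produced by Wilks (equivalently the passage from $\log\frac N{2\pi}$ to $\log\frac N{2\pi e}$) and the factor $\log\mathrm{vol}_g(M)$ coming from the volume normalization of Jeffreys' prior, since these are exactly the terms that fix the value of the subsequent $\sup_w$ and must be reconciled with the form stated in the lemma.
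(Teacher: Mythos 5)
Your proposal follows the same skeleton as the paper's proof (Laplace expansion of $m_N$, then integrate the resulting expression for the Kullback--Leibler divergence against $w\,dV_g$), but it diverges at the single step that fixes the constant, and the divergence is not reconcilable bookkeeping: it changes the answer. The paper's proof invokes local soundness to localize the posterior at the \emph{data-generating} parameter $\theta_0$ and expands $m_N$ there, as in Eq.~\eqref{eq: lemma2}; inside $D_{KL}\left(p^N_{\theta_0}\mathbin{||}m_N\right)$ the two terms $\log p(x^N|\theta_0)$ then cancel identically, no likelihood-ratio statistic survives, and the $N$-dependent constant comes out as $\tfrac{n}{2}\log\tfrac{N}{2\pi}$. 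You instead expand at the MLE $\hat\theta$ (where the maximum of the integrand actually sits) and pay for the mismatch between $\theta$ and $\hat\theta$ with Wilks' theorem, picking up $-\tfrac{n}{2}$, i.e.\ $\tfrac{n}{2}\log\tfrac{N}{2\pi e}$. Carrying your route to the end, together with your (correct) identity $-\int_M w\log w\,dV_g=-D_{KL}\left(w\mathbin{||}w^{\textnormal{Jeffreys}}\right)+\log\mathrm{vol}_g(M)$, gives
\begin{align*}
\int_{M}w(\theta)D_{KL}\left(p_{\theta}^N\mathbin{||}m_{N}\right)dV_g=-D_{KL}\left(w\mathbin{||}w^{\textnormal{Jeffreys}}\right)+\log\mathrm{vol}_g(M)+\frac{n}{2}\log\frac{N}{2\pi e} +\textnormal{o}(1),
\end{align*}
which differs from the lemma's right-hand side by $\log\mathrm{vol}_g(M)-\tfrac{n}{2}$. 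You notice this tension (``must be reconciled with the form stated in the lemma'') but leave it open, so as a proof of the statement as written the proposal does not close.

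To be fair, the $-\tfrac{n}{2}$ is not an error on your side: your computation is the standard Clarke--Barron redundancy asymptotic, and the quantity on the left-hand side, being an expected log-likelihood ratio, genuinely carries the $2\pi e$. The paper reaches $2\pi$ only because it expands $m_N$ at $\theta_0$ rather than at $\hat\theta$; read literally this is not a valid Laplace expansion, since $\log p(x^N|\hat\theta)-\log p(x^N|\theta_0)$ is $\mathrm{O}_P(1)$ (asymptotically $\tfrac{1}{2}\chi^2_n$), which is exactly the Wilks contribution you keep and the paper drops. In other words, the paper's constant is the one appropriate to pointwise (Shtarkov/Rissanen) regret, while its definition of $\mathcal{R}_N$ via $D_{KL}$ is a redundancy; the lemma sits on the fault line of that conflation. (The $\log\mathrm{vol}_g(M)$ discrepancy is shared by both routes: the paper's own expansion also produces it, it is simply omitted from the lemma's statement and resurfaces in Theorem~\ref{thm:regret}.) So: if your goal is the lemma exactly as stated, you must adopt the paper's cancelation at $\theta_0$ and argue why the expansion there is legitimate; your route, made rigorous, proves a different formula.
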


\begin{proof}
The local soundness assumption on $p(x|\theta)$ yields localization, at a sufficiently fast rate~\cite{cla:bar:90}, of the distribution 
\begin{align*}
p(\theta|x^N)=\frac{w(\theta)p(x^N|\theta)}{m_{N}(x^N)}
\end{align*}
on a neighborhood of $\theta_0\in M$, where $\theta_0$ is the value of $\theta$ that generates the data $x^N$. The argument for localization goes as follows. Let $\{U_{\alpha}\}_{\alpha\in A}$ be an open covering of $M$ by coordinate neighborhoods with $\phi_{\alpha}:U_{\alpha}\to\mathbb{R}^n$ the chart map. Then over $\phi(U_{\alpha})$, $\alpha\in A$, the family of distributions $\{p(\theta=\phi^{-1}(\xi)|x^N)\}_{\xi\in \phi(U_{\alpha})\subset \mathbb{R}^n}$ is sound as in the definition of Clark and Barron~\cite{cla:bar:90}. It follows by their results that the distribution localizes on $\phi_{\alpha}(\theta_0)$ for some $\alpha \in A$, i.e., an open set containing $\theta_0$, where $\theta_0$ is the value of $\theta$ that generated the data $x^N$.

This fact allows for the use of Laplace's approximation, generalized for manifolds, on the integral defining $m_N(x^N)$. Concretely, we have,
\begin{align}
m_N(x^N)&=\int_{M}w(\theta)p(x^N|\theta)dV_g \nonumber\\
&=w(\theta_0)p(x^N|\theta_0)\times \left(\frac{2\pi}{N}\right)^n\times \frac{\sqrt{\det g_{\theta_0}}}{\sqrt{\det I_{\theta_0}}}\times\left(1+\frac{1}{6N}\tr\left(I_{\theta_0}^{-1}R_{\theta_0}\right)+\frac{1}{N}c+\text{O}\left(\frac{1}{N^2}\right)\right),
\label{eq: lemma2}
\end{align}
where $c$ is a constant which depends on the Hessian of $w$ and it is $0$ for Jeffreys' prior. For the purpose of this proof, it is enough to keep the terms up to $\text{O}(1)$. The lemma follows by applying the resulting expression for $m_N$ on $\int_{M}w(\theta)D_{KL}\left(p_{\theta}^N||m_{N}\right)dV_g$.

\end{proof}

\begin{theorem}\label{thm:regret} Let $\{p(X|\theta)\}_{\theta\in M}$ be a locally sound smooth family of probability distributions over $\mathcal{X}$, where $M$ is an oriented smooth manifold of dimension $n$. Let $x^N$ be a data set generated by the probability distribution $p^{N}(X^N|\theta_0)$ for some $\theta_0\in M$. The minmax regret $\mathcal{R}_{N}(x^N)$ is given by
\begin{align*}
\mathcal{R}_{N}(x^N)=\int_{M}w^{\textnormal{Jeffreys}}(\theta)D_{KL}(p^{N}_{\theta}\mathbin{||}m^{\textnormal{Jeffreys}}_{N})dV_g&=\frac{n}{2}\log\left(\frac{N}{2\pi}\right) +\log \textnormal{vol}_g(M)-\log\left(\frac{\sqrt{\det(g_{\theta_0})}}{\sqrt{\det(I(x^N,\theta_0))}}\right)\nonumber\\
&-\frac{1}{6N}R(\theta_0) +\textnormal{O}\left(\frac{1}{N^2}\right).
\end{align*}

\begin{proof} Given the assumption that $p(x|\theta)$ is locally sound, we have the topology of weak convergence (i.e. the topology as defined by $\beta$ in Haussler's paper~\cite{haus:97}). Haussler's version of the capacity theorem gives
\begin{align*}
\sup_{w}\inf_{q} I(w, q) =\inf_{q}\sup_{\theta\in M} D_{KL}\Big(p(x^N|\theta)\mathbin{||} q(x^N)\Big)=:\mathcal{R}_N.
\end{align*}
By Lemma~\ref{lemma:1}, we conclude 
\begin{align*}
\mathcal{R}_{N}=\sup_{w}I(w,m_{N})=\sup_{w}\int_{M} w(\theta) D_{KL}\left(p^{N}_{\theta}\mathbin{||}m_{N}\right)dV_g.
\end{align*}
By Lemma~\ref{lemma:2}, it follows that the supremum is achieved for $w=w^{\textnormal{Jeffreys}}$. Finally, if in the proof of Lemma~\ref{lemma:2} we replace $w$ by $w^{\text{Jeffreys}}$ and keep all the terms as in Eq.~\eqref{eq: lemma2}, the result follows.

\end{proof}

\end{theorem}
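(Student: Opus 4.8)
The plan is to obtain the statement by assembly: Haussler's capacity theorem reduces the minmax regret to a saddle-point over priors $w$ and codes $q$, and Lemmas~\ref{lemma:1} and~\ref{lemma:2} collapse that saddle-point onto the Bayes mixture with Jeffreys' prior. The only genuinely new work is then extracting the precise subleading terms by feeding $w^{\textnormal{Jeffreys}}$ into the full manifold Laplace expansion of the marginal $m_N$. First I would invoke the local soundness hypothesis: it guarantees that $\theta\mapsto p(\cdot|\theta)$ is continuous in the weak topology, which is exactly the hypothesis $\beta$ required to apply Haussler's version of the capacity theorem~\cite{haus:97}. This yields $\mathcal{R}_N=\sup_w\inf_q I(w,q)$, identifying the minmax regret with the capacity of the channel between $(M,w\,dV_g)$ and the observations.

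Second, for each fixed prior $w$ I would apply Lemma~\ref{lemma:1}. Since $m_N=\int_M w(\theta)p^N_\theta\,dV_g$ and $\int_M w\,dV_g=1$, the compensation identity $\int_M w\,D_{KL}(p^N_\theta||q)\,dV_g=\int_M w\,D_{KL}(p^N_\theta||m_N)\,dV_g+D_{KL}(m_N||q)$ shows that the inner infimum over $q$ is attained at the Bayes mixture $q=m_N$. Hence $\mathcal{R}_N=\sup_w\int_M w(\theta)\,D_{KL}(p^N_\theta||m_N)\,dV_g$. Next, Lemma~\ref{lemma:2} rewrites this integral as $-D_{KL}(w||w^{\textnormal{Jeffreys}})+\frac{n}{2}\log\frac{N}{2\pi}+\textnormal{o}(1)$. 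Because relative entropy is nonnegative and vanishes precisely when $w=w^{\textnormal{Jeffreys}}$, the leading behavior forces the maximizer to Jeffreys' prior, and the penalty term drops out at the optimum.

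Finally I would extract the geometric content by substituting $w=w^{\textnormal{Jeffreys}}$ into the computation underlying Lemma~\ref{lemma:2}, but now retaining every term of the manifold Laplace expansion of $m_N$ from Corollary~\ref{corol:1} and Eq.~\eqref{eq: lemma2}, rather than truncating at $\textnormal{O}(1)$. Using $w^{\textnormal{Jeffreys}}(\theta)=1/\textnormal{vol}_g(M)$, the factor $-\log w$ contributes $+\log\textnormal{vol}_g(M)$ and the Hessian-of-$w$ constant $c$ vanishes; the pointwise expansion of $-\log m^{\textnormal{Jeffreys}}_N(x^N)$ then produces the volume term, the determinant ratio $-\log(\sqrt{\det g_{\theta_0}}/\sqrt{\det I(x^N,\theta_0)})$, and the curvature term $-\frac{1}{6N}\tr(I^{-1}R_{\theta_0})=-\frac{1}{6N}R(\theta_0)$ after replacing $I^{-1}$ by $g^{-1}$ exactly as in Theorem~\ref{thm:2}. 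The localization of the posterior $p(\theta|x^N)$ on a neighborhood of $\theta_0$ (guaranteed by local soundness via Clark and Barron~\cite{cla:bar:90}) is what lets me evaluate these invariants at $\theta_0$ and read off the data-dependent closed form $\mathcal{R}_N(x^N)$.

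The hard part will be twofold. First, I must justify rigorously that the supremum over $w$ is genuinely attained at $w^{\textnormal{Jeffreys}}$ and not merely that the leading term points there; this requires some uniformity of the $\textnormal{o}(1)$ remainder in Lemma~\ref{lemma:2} over priors $w$ near Jeffreys', which rests on the localization being uniform on a fixed coordinate neighborhood of $\theta_0$. Second, I must reconcile the a priori number-valued expected divergence $\int_M w^{\textnormal{Jeffreys}}(\theta)\,D_{KL}(p^N_\theta||m^{\textnormal{Jeffreys}}_N)\,dV_g$ with the data-dependent expression: this is precisely what forces the careful use of localization, so that $\hat\theta\approx\theta_0$ and the observed information $I(x^N,\theta_0)$ appears, combined with the validity of the Laplace expansion on the closed oriented manifold $(M,g)$.
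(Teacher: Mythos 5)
Your proposal follows the paper's own proof essentially verbatim: local soundness gives the weak topology needed for Haussler's capacity theorem, Lemma~\ref{lemma:1} collapses the inner infimum onto the Bayes mixture $m_N$, Lemma~\ref{lemma:2} identifies the optimal prior as $w^{\textnormal{Jeffreys}}$ via nonnegativity of relative entropy, and the closed form follows by re-running the Laplace expansion of Eq.~\eqref{eq: lemma2} with $w=w^{\textnormal{Jeffreys}}$ and all terms retained. Your added remarks on the vanishing of the constant $c$, the replacement of $I^{-1}$ by $g^{-1}$, and the uniformity issues in the $\textnormal{o}(1)$ remainder are sound elaborations of steps the paper leaves implicit, not a departure from its argument.
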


 Observe Theorem.~\ref{thm:2} follows from this result by adding the length of the optimal code, $-\log p(x^N|\theta_0)$, and replacing $\theta_0$ with the unique (by assumption) estimator $\hat\theta$ in the manifold.

\section{Application to PCA}\label{sec:pca}
Let $x^{N}=(x_1,...,x_N)\in\mathcal{X}^{N}$ be a data set, where now we take $\mathcal{X}=\mathbb{R}^{d}$, thus $x^{N}$ will be interpreted as a $d\times N$ real-valued matrix. Suppose that the empirical mean $\bar{x}=(1/N)\sum_{i=1}^{N}x_i$ vanishes. If it does not, we can always shift the data by the empirical mean so that the transformed data satisfies this requirement. Let $\Sigma=x^{N}\left({x^{N}}\right)^T/N$ be the empirical covariance matrix and assume that $s$ is the smallest integer such that  $\Sigma \leq 2^{2s} I_{d}$, where $I_d$ is the $d\times d$ identity matrix. For the data points to be independent of a unit system, we assume all the data to be an integer multiple of the some fundamental precision. With this convention, all covariance matrices $\Sigma$ are such that $I_{d}\leq \Sigma$. Moreover, let $\Lambda= \text{Tr} (\Sigma)$, then $\Lambda \leq d \; 2^{2s}$. 
The principal component analysis (PCA) is a method for dimensional reduction of the data using the information contained in the empirical covariance $\Sigma$. Namely, given the dimension $d$ of the Euclidean space where the data points live in, we construct a new covariance matrix $\Sigma_{r}$ as follows. Let $S$ be a rotation matrix of eigenvectors of $\Sigma$, so that
\begin{align*}
\Sigma= S\text{diag}(\lambda_1,...,\lambda_d) S^t.
\end{align*}
By applying a permutation matrix if necessary, we may assume that $\lambda_i\geq \lambda_i+1$, $i=1,...,d$. The idea is to simplify the representation of the data by taking the first $m$ directions of distinguishability and simplify the description of the others by taking an isotropic subspace where the variance is the average the remaining ones. Explicitly, 
\begin{align*}
\Sigma_r=S\left(\text{diag}(\lambda_1,...,\lambda_{m})\oplus \bar{\lambda} I_{d-m}\right)S^t,
\end{align*}
where $\bar{\lambda}=(\Lambda -\sum_{i=1}^{m}\lambda_i)/(d-m)$. 

 The problem is to find a criterion to determine an optimal $m$. In the following, by using the results of the previous sections, we will provide one natural criterion. We will write
\begin{align*}
S=[v_1,...,v_d]=[A \ B],
\end{align*}
where $A=[v_{1},...,v_{m}]$ and $B=[v_{m+1},...,v_d]$, and $v_i\in \mathbb{R}^d$, $i=1,...,d$. Let $V_{A}=\text{span}\{v_{1},...,v_{m}\}$, with $\dim V=m$, be subspace generated by the first $m$ columns of $S$ and similarly for $V_{B}=\text{span}\{v_{m+1},...,v_{d}\}$. It is clear that $V_{A}\oplus V_{B}$ is an orthogonal decomposition of $\mathbb{R}^d$. We take as our statistical model the family of Gaussian distribution centered at $0\in \mathbb{R}^d$, whose covariance matrix assumes the form:
\begin{align}\label{Eq: Q}
Q= AqA^t+\bar{\lambda} BB^t,
\end{align} 
where $A,B$ and $\bar{\lambda}$ are fixed by the data set and $q$ is a $m\times m$ positive definite matrix, and $I_{d}\leq Q\leq 2^{2s} I_d$ which is equivalent to $I_{m}\leq q\leq 2^{2s} I_m$ .
\begin{align*}
p(x|Q)=\frac{1}{\sqrt{\det (2\pi Q)}}\exp\left(-\frac{1}{2}x^t Q x\right).
\end{align*}
The induced Fisher metric is simply given by
\begin{align*}
ds^2 =\frac{1}{2}\text{Tr}\left(Q(q)^{-1}dQ(q) Q^{-1}(q)dQ(q)\right)=\frac{1}{2}\text{Tr}\left(q^{-1}dq q^{-1}dq\right),
\end{align*}
where we used the map $q\mapsto Q(q)$ from Eq.~\eqref{Eq: Q} to get to the last result (formally this is called a pullback).
Note that this is exactly the same as the Fisher metric in the space of Gaussian distributions in dimension $m$, that the specific details of the subspace $V_A$ (or equivalently $V_B$) do not enter in its description, and neither does $\bar{\lambda}$. Moreover, it can be shown that the Ricci scalar~\cite{dol:per:19} for this metric is constant and equal to
\begin{align*}
R=-\frac{(m+2)m(m-1)}{4}.
\end{align*}

The Riemannian volume element in the space $\mathscr{P}_m=\{q\in \text{Mat}_{m\times m}(\mathbb{R}): q^t=q,\; q>0\}$, equipped with the Fisher metric $g=(1/2)\tr\left(q^{-1}dq q^{-1} dq\right)$ is given by (see Ref.~\cite{ter:12}, where they take a Riemannian metric which differs by a constant conformal factor $g'=2g$)
\begin{align*}
dV_g(q)= 2^{-\frac{m}{2}}\det (q)^{-\frac{(m+1)}{2}}\prod_{1\leq i\leq j\leq m}dq_{ij},
\end{align*}
where $q=[q_{ij}]_{1\leq i\leq j\leq m}$. We wish to evaluate the volume of the compact subspace $M(s)=\{q\in \mathscr{P}_m:I_{m}\leq q\leq 2^{2s} I_m \}$ with respect to this measure
\begin{align*}
\int_{M(s)}dV_g=2^{-\frac{m}{2}}\int_{M(s)}\det (q)^{-\frac{(m+1)}{2}}\prod_{1\leq i< j\leq m}dq_{ij}.
\end{align*}
Observe that the action of the group $\text{O}(m)$ on $M(s)$ by $q\mapsto K q K^t$, for $K\in \text{O}(m)$, preserves $M(s)$. One can show then that, see Ref.~\cite{ter:12}, using a parametrization $q=K a K^t$, where $a\in A=\{a\in \mathscr{P}_m: a=\text{diag}(a_1,...,a_m)\}$ and $K\in \text{O}(m)$, that 
\begin{align*}
\int_{M_s}dV_g=2^{-3\frac{m}{2}}\frac{1}{m!}\text{vol}\left(\text{O}(m)\right) \int_{[1,2^{2s}]^{m}} \prod_{j=1}^{m}a_j^{-\frac{(m-1)}{2}}\prod_{1\leq i<j\leq m}|a_i-a_j| \prod_{i=1}^{m} da_i,
\end{align*}
and $\text{vol}\left(\text{O}(m)\right)$ is volume of the orthogonal group given by
\begin{align*}
\text{vol}\left(\text{O}(m)\right)=\frac{2^{m}\pi^{\frac{m(m+1)}{4}}}{\prod_{j=1}^{m}\Gamma\left(\frac{j}{2}\right)},
\end{align*}
where $\Gamma$ is the Gamma function. To compute the volume of $M(s)$ it remains to compute the integral
\begin{align*}
I(s)=\int_{[1,2^{2s}]^{m}} \prod_{j=1}^{m}a_j^{-\frac{(m-1)}{2}}\prod_{1\leq i< j\leq m}|a_i-a_j| \prod_{i=1}^{m} da_i.
\end{align*}
A more convenient coordinate choice is provided by $a_i=e^{r_i}$ as done in Ref.~\cite{sai:bom:ber:man:17}, where now $0\leq r_i\leq \log(2^{s})$ or equivalently $0\leq r_i\leq s\log(2)$, and by the change of variables formula we get,
\begin{align*}
I(s)=8^{\frac{m(m-1)}{4}}\int_{[0,s\log(2)]^{m}}\prod_{1\leq i < j\leq m}\sinh\left(\frac{|r_i-r_j|}{2}\right)\prod_{i=1}^{m}dr_i. 
\end{align*}
Additionally, performing the change of variables $u_i=r_i/(s\log(2))$, we get
\begin{align*}
I(s)=s^{m}\left(\log(2)\right)^m 8^{\frac{m(m-1)}{4}}\int_{[0,1]^m}\prod_{1\leq i < j\leq m}\sinh\left(s\log(2)|u_i-u_j|\right)\prod_{i=1}^{m}du_i.
\end{align*} 
For large $s$, we can approximate the hyperbolic sine by the exponential  of the argument divided by two,
\begin{align*}
I(s)&\sim s^{m}\left(\log(2)\right)^m 8^{\frac{m(m-1)}{4}} 2^{-\frac{m(m-1)}{2}}\int_{[0,1]^m}\prod_{1\leq i < j\leq m}\exp\left(s\log(2)|u_i-u_j|\right)\prod_{i=1}^{m}du_i\\
&=s^{m}\left(\log(2)\right)^m 2^{\frac{m(m-1)}{4}}\int_{[0,1]^m}\prod_{1\leq i < j\leq m}\exp\left(s\log(2)|u_i-u_j|\right)\prod_{i=1}^{m}du_i\\
&=s^{m}\left(\log(2)\right)^m 2^{\frac{m(m-1)}{4}}\int_{[0,1]^m}\exp\left(s\log(2)\sum_{1\leq i < j\leq m}|u_i-u_j|\right)\prod_{i=1}^{m}du_i.
\end{align*}
The latter expression corresponds to taking the expectation value on $m$ random variables $\{u_i\}_{i=1}^{m}$ uniformly distributed on $[0,1]$. 
An upper bound for the integral is obtained by noting
\begin{align*}
\exp\left(s\log(2)\sum_{1\leq i < j\leq m}|u_i-u_j|\right)\leq \exp\left(s\log(2)\sum_{1\leq i < j\leq m}1\right)=\exp\left(s \log(2) \frac{m(m-1)}{2}\right), 
\end{align*}
for $u_i\in [0,1]$, $i=1,...,m$. This yields,
\begin{align*}
I(s)\leq  s^{m}\left(\log(2)\right)^m 2^{\frac{m(m-1)}{4}}\exp\left(s\log(2) \frac{m(m-1)}{2}\right).
\end{align*}
Since the exponential is a convex function, Jensen's inequality yields,
\begin{align*}
\exp\left(s\log(2)\int_{[0,1]^m}\sum_{1\leq i < j\leq m}|u_i-u_j|\prod_{i=1}^{m}du_i\right)\leq \int_{[0,1]^m}\exp\left(s\log(2)\sum_{1\leq i < j\leq m}|u_i-u_j|\right)\prod_{i=1}^{m}du_i,
\end{align*}
using the result $\int_{0}^{1}\int_{0}^{1}|u_1-u_1| du_1du_2=1/3$, we get
\begin{align*}
\exp\left(s\log(2)\frac{m(m-1)}{6}\right)\leq \int_{[0,1]^m}\exp\left(s\log(2)\sum_{1\leq i < j\leq m}|u_i-u_j|\right)\prod_{i=1}^{m}du_i.
\end{align*}
So that
\begin{align*}
s^{m}\left(\log(2)\right)^m 2^{\frac{m(m-1)}{4}}\exp\left(s\log(2) \frac{m(m-1)}{6}\right)\leq  I(s)\leq s^{m}\left(\log(2)\right)^m 2^{\frac{m(m-1)}{4}}\exp\left(s\log(2) \frac{m(m-1)}{2}\right),
\end{align*}
which implies
\begin{align}\label{eq:Isbound}
s\log(2) \frac{m(m-1)}{6}\leq \log I(s)- \left( m\log\left(s\log(2)\right) +\frac{m(m-1)}{4}\log(2)\right)\leq s\log(2) \frac{m(m-1)}{2}.
\end{align}
Recalling, 
\begin{align*}
\text{vol}_g\left(M(s)\right)=2^{-3\frac{m}{2}}\frac{1}{m!}\text{vol}\left(\text{O}(m)\right)I(s),
\end{align*}
we can write,
\begin{align*}
&\log\text{vol}_g\left(M(s)\right)=-\frac{3}{2}m -\log(m!) +\log\text{vol}\left(\text{O}(m)\right) +\log I(s)\\
&=-\frac{3}{2}m -\log(m!) +\log\left(\frac{2^{m}\pi^{\frac{m(m+1)}{4}}}{\prod_{j=1}^{m}\Gamma\left(\frac{j}{2}\right)}\right) +\log I(s)\\
&=-\frac{3}{2}m -\log(m!) +m\log(2)+\frac{m(m+1)}{4}\log(\pi)-\sum_{j=1}^{m}\log \Gamma\left(\frac{j}{2}\right) +\log I(s).
\end{align*}

Additionally, we have that
\begin{align*}
\sum_{j=1}^{m}\log \Gamma\left(\frac{j}{2}\right)=\log \left(\frac{\pi^{1/4} A^{3/2} G\left(\frac{m}{2}-\frac{1}{4} (-1)^{m+1}+\frac{3}{4}\right)}{{2}^{1/24} e^{1/8}}\right)+\log \left(G\left(\left\lfloor \frac{m}{2}\right\rfloor +1\right)\right),
\end{align*}
where $A$ is the Glaisher constant and $G$ is the Barnes $G$-function.
As a consequence,
\begin{align*}\label{eq:logvol}
\log\text{vol}_g\left(M(s)\right)=&-\frac{3}{2}m -\log(m!) +m\log(2)+\frac{m(m+1)}{4}\log(\pi)\\
&-\log \left(\frac{\pi^{1/4} A^{3/2} G\left(\frac{m}{2}-\frac{1}{4} (-1)^{m+1}+\frac{3}{4}\right)}{{2}^{1/24} e^{1/8}}\right)-\log \left(G\left(\left\lfloor \frac{m}{2}\right\rfloor +1\right)\right)
 +\log I(s).
\end{align*}
The bound for $\log I(s)$ is given in inequality~\eqref{eq:Isbound}. 
 And so, by Theorem~\ref{thm:regret}, and by taking $\theta_0$ to be $\hat Q$, we have that
 \begin{equation}\label{eq:gaussgc2}
L^*(x^N)=- \log p(x^N|\hat{Q})+\frac{m(m+1)}{4}\log\left(\frac{N}{2\pi}\right)+\log \textnormal{vol}_g(M(s))+\frac{(m+2)m(m-1)}{24N}.
 \end{equation} 
Observe that 
$$\log\left(\frac{\sqrt{\det(g_{\hat Q})}}{\sqrt{\det(I(x^N,\hat Q))}}\right)=0,$$
as can be noted by using the entries of the inverse of covariance matrix as coordinates in the manifold and noting that
\begin{align*}
-\frac{\partial^2 \log p(x|Q)}{\partial Q_{ij}^{-1}\partial Q_{kl}^{-1}}=\frac{1}{2}	Q_{ki}Q_{jl}, 1\leq i,j,k,l\leq m,
\end{align*}
which implies, for all $i,j,k,l$, that
\begin{align}
I_{ij,kl}(x^N,\hat{Q})=\frac{1}{N}\frac{\partial^2 \log p(x^N|Q)}{\partial Q_{ij}^{-1}\partial Q_{kl}^{-1}}\Big|_{Q=\hat{Q}}=E\left[-\frac{\partial^2 \log p(x|Q)}{\partial Q_{ij}^{-1}\partial Q_{kl}^{-1}}\right]\Big|_{Q=\hat{Q}}=g_{ij,kl}(\hat{Q}),
\end{align}
where $g_{ij,kl}(\hat{Q})$ denote the components of the Fisher metric at this point and in theses coordinates (note that, by the cyclic property of the trace, $g(Q)=(1/2)\text{Tr}\left[(Q^{-1}dQ)^2\right]=(1/2)\text{Tr}\left[(QdQ^{-1})^2\right]$).
Thus, for optimal PCA dimensional reduction, one takes the dimension $m$ that minimizes Eq.~\eqref{eq:gaussgc2} and takes the upper bound of $\log(I(s))$ as given by  inequality~\eqref{eq:Isbound}.

\section{Conclusions and outlook}\label{sec:conc}
In this paper, we derived an asymptotic formula for the posterior according to Jeffrey's prior, by extending Laplace's method to manifolds, which we called geometric complexity (see Theorem~\ref{thm:2} and compare it with Eq.~\eqref{eq:geomcomp}). Then, we provided the minmax regret for general statistical manifolds by introducing the notion of locally sound smooth families of probability distributions, which builds on Clarke and Barron's results for bounded open sets in $\mathbb{R}^n$. Finally, we gave an explicit formula of the geometric complexity for families of Gaussian distributions with zero-mean, and varying covariance, and apply this formula to optimal dimensional reduction in PCA.

Future work includes finding more expressions of the geometric complexity for other families of probability distributions. Another interesting area of research is to understand the higher-order corrections to the Geometric complexity, as they might be relevant for high dimensional data.

\appendices
\section{Proof of Theorem~\ref{th: Laplace Riemannian orientable closed}}
We begin by recalling the analogous result valid in $\mathbb{R}^n$.
\begin{theorem} (Laplace's method) Let $f\in C^2(\mathbb{R}^n)$, with $\int_{\mathbb{R}^n} e^{-f(x)}dx<\infty$, such that there exists a unique $x_0$ with
\begin{align*}
df(x_0)=0 \text{ and } \hess(f)(x_0)<0,
\end{align*}
i.e., $x_0$ is the unique global maximum of $f$. Suppose additionally that for every $x\in \mathbb{R}^n-\{x_0\}$, we have that $f(x)<f(x_0)$, i.e, $f(x_0)$ is really the maximum value $f$ can have. Then
\begin{align*}
\lim_{N\to \infty}\frac{\int_{\mathbb{R}^n} e^{Nf(x)}dx}{e^{Mf(x_0)}\sqrt{\det\left[2\pi (-N\hess(f)(x_0))^{-1}\right]}}=1.
\end{align*}
\end{theorem}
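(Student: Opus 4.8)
The plan is to run the classical localization-and-rescaling proof of the multivariate Laplace method, with the two hypotheses (the strict global maximum and the integrability of the unscaled integrand) doing exactly the work of controlling the tail and justifying dominated convergence. Throughout write $H:=\hess(f)(x_0)$, which is negative definite by assumption, and let $\lambda_{\min}(-H)>0$ be the smallest eigenvalue of $-H$.

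First I would localize. Fix a small radius $\delta>0$ and split $\mathbb{R}^n=B_\delta(x_0)\cup B_\delta(x_0)^c$. On any compact annulus $\{\delta\le |x-x_0|\le R\}$ the strict inequality $f(x)<f(x_0)$ together with continuity gives $\sup f<f(x_0)$, and the integrability hypothesis forces $f$ to stay uniformly below $f(x_0)$ outside a large ball as well (otherwise the unscaled integrand would fail to be integrable near infinity). Hence there is $\eta>0$ with $f(x)\le f(x_0)-\eta$ for all $x\in B_\delta(x_0)^c$. Writing $e^{Nf}=e^{(N-1)f}e^{f}$ and using $f\le f(x_0)-\eta$ on the complement bounds $\int_{B_\delta(x_0)^c}e^{Nf}\,dx$ by $e^{(N-1)(f(x_0)-\eta)}\int_{\mathbb{R}^n}e^{f}\,dx$, which is exponentially small compared with the target order $e^{Nf(x_0)}N^{-n/2}$.

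Next I would expand and rescale on the ball. Since $df(x_0)=0$, Taylor's theorem gives $f(x)=f(x_0)+\tfrac12(x-x_0)^{T}H(x-x_0)+r(x)$ with $r(x)=o(|x-x_0|^2)$. Substituting $x=x_0+y/\sqrt{N}$ (Jacobian $N^{-n/2}$) turns the ball contribution into
$$\int_{B_\delta(x_0)}e^{Nf}\,dx=e^{Nf(x_0)}N^{-n/2}\int_{|y|<\delta\sqrt{N}}\exp\!\Big(\tfrac12 y^{T}Hy+N\,r(x_0+y/\sqrt{N})\Big)\,dy.$$
As $N\to\infty$ the domain swells to $\mathbb{R}^n$ and $N\,r(x_0+y/\sqrt{N})=N\cdot o(|y|^2/N)\to0$ pointwise. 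Choosing $\delta$ small enough that $|r(x)|\le \tfrac14|x-x_0|^2\,\lambda_{\min}(-H)$ on $B_\delta(x_0)$ makes the integrand dominated by the fixed Gaussian $\exp(\tfrac14 y^{T}Hy)$ uniformly in $N$, so the dominated convergence theorem yields $\int_{\mathbb{R}^n}\exp(\tfrac12 y^{T}Hy)\,dy=\sqrt{\det(2\pi(-H)^{-1})}$ in the limit.

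Combining the two pieces, the ball contribution is asymptotically $e^{Nf(x_0)}N^{-n/2}\sqrt{\det(2\pi(-H)^{-1})}$ and the complement is negligible, so the numerator is asymptotic to $e^{Nf(x_0)}\sqrt{\det[2\pi(-NH)^{-1}]}$, which is exactly the denominator; dividing gives the limit $1$. I expect the main obstacle to be the domination step: one must choose the localization radius $\delta$ small enough, using continuity of the second derivatives, so that the cubic-and-higher remainder is uniformly absorbed into a negative-definite quadratic, guaranteeing a single integrable Gaussian majorant valid for all $N$. The tail estimate at infinity is the other delicate point, and it is precisely where the integrability hypothesis is indispensable.
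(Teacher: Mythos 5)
First, a point of orientation: the paper never actually proves this Euclidean theorem --- it is recalled in the appendix as classical background and used as a black box inside the proof of the manifold version (Theorem~\ref{th: Laplace Riemannian orientable closed}). So your proposal is measured against the standard argument, and your plan (localize, Taylor-expand, rescale by $\sqrt{N}$, dominate by a fixed Gaussian, apply dominated convergence) is exactly that standard route. The local half of your argument is correct and complete: choosing $\delta$ so that $|r(x)|\le\tfrac14\lambda_{\min}(-H)\,|x-x_0|^2$ on $B_\delta(x_0)$ does yield the integrable majorant $\exp(\tfrac14 y^{T}Hy)$, and dominated convergence gives the limit $\sqrt{\det\left(2\pi(-H)^{-1}\right)}$ for the rescaled ball integral.

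The genuine gap is in your tail estimate, specifically the sentence claiming that integrability of the unscaled integrand ``forces $f$ to stay uniformly below $f(x_0)$ outside a large ball.'' That inference is false: integrability of $e^{f}$ only forces the set where $f$ exceeds $f(x_0)-\eta$ to have finite measure, not to be bounded. Concretely, in $n=1$, normalize $f(x_0)=0$ at $x_0=0$ and take $f(x)=-x^2$ away from the integers, modified near each integer $k\ge 2$ so that $f$ rises to a narrow peak of height $-2^{-k}$ and width of order $k^{-2}$ with a quartic (degenerate) top. Then $f$ is smooth, $f<0$ off the origin, $\int e^{f}dx<\infty$, and the origin remains the unique point at which both $df=0$ and the Hessian is negative definite (the peak tops are degenerate critical points), so all stated hypotheses hold; yet $\sup_{|x|\ge\delta}f=0$ for every $\delta>0$, so no uniform gap $\eta$ exists. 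Worse, this is not a repairable technicality of exposition: for this $f$ the peak at $k\approx\log_2 N$ contributes on the order of $N^{-1/4}$ (up to logarithmic factors) to $\int e^{Nf}dx$, which swamps the claimed main term $\sqrt{\pi/N}$, so the conclusion of the theorem itself fails under the hypotheses as stated. The fix is to \emph{assume} the uniform-gap condition --- for every $\delta>0$, $\sup_{|x-x_0|\ge\delta}f<f(x_0)$ --- as classical formulations of Laplace's method do; it cannot be derived from strict maximality plus integrability, and once it is added your proof is correct as written. Note that this is precisely the step where the paper's appendix proof of the manifold analogue leans on compactness: there, $M-U_k$ is compact, so the gap $\eta>0$ exists for free, whereas on noncompact $\mathbb{R}^n$ it must be postulated. (Separately, you tacitly --- and reasonably --- read the hypothesis $\int e^{-f}dx<\infty$ as $\int e^{f}dx<\infty$; as printed, with the minus sign, it is incompatible with $f$ attaining a global maximum, since then $e^{-f}\ge e^{-f(x_0)}>0$ everywhere.)
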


\begin{remark}\label{rmk:Laplace} Another useful formulation of the above theorem found recurrently in the literature is given by
\begin{align*}
\int_{\mathbb{R}^n}h(x) e^{Nf(x)} dx\sim h(x_0) e^{Nf(x_0)}\sqrt{\det\left[2\pi (-N\hess(f)(x_0))^{-1}\right]}, \text{ as } N\to\infty.
\end{align*}
for a function $h$.
\end{remark}

Now let $(M,g)$ be a compact closed oriented Riemannian manifold of dimension $n$ and let $dV_g=\sqrt{\det(g)(x)}dx^1\wedge...\wedge dx^n$ be associated Riemannian volume form written in local coordinates $(x^1,...,x^n)$. We wish to generalize Laplace's method to integrals of the form
\begin{align*}
\int_{M}e^{Nf}dV_g,
\end{align*}
for large positive $N$ and $f$ being a smooth function with non-degenerate maximum at $p_0$. Recall that, at $p_0$, there is a well-defined non-degenerate bilinear form $\hess_{p_{0}}(f):T_{p_{0}}M\times T_{p_{0}}M\to\mathbb{R}$ defined by
\begin{align*}
\hess_{p_{0}}(f)(X,Y)=\widetilde{X}\cdot (\widetilde{Y}\cdot f) (p_{0}),
\end{align*}
where $\widetilde{X}$ and $\widetilde{Y}$ are arbitrary extensions of $X,Y\in T_{p_{0}}M$ to vector fields in an open neighbourhood of $p_{0}$. 

We will also need the following result.
\begin{proposition} Let $(x^1,...,x^n)$ be Riemann normal coordinates centered at some point $p$ defined in some open neighborhood $U\subset M$, then, there exists a neighborhood of $p$, $V\subset U$, such that
\begin{align*}
\sqrt{\det(g(x))}= 1- \frac{1}{6}\sum_{i,j=1}^{n}R_{ij}(0)x^{i} x^{j} +\mbox{O}(||x||^3),
\end{align*}
where $R_{ij}(0)$ are the components of the Ricci tensor with respect to the $x^i$'s.
\label{prop: Riemann normal coords}
\end{proposition}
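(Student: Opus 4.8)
The plan is to derive the claimed expansion from the second-order Taylor expansion of the metric components in normal coordinates, followed by elementary expansions of the determinant and of the square root. The geometric content lives entirely in the first step; the remainder is linear algebra and one-variable calculus.

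First I would recall the defining property of Riemann normal coordinates centered at $p$: the radial lines $t\mapsto(tv^1,\dots,tv^n)$ are geodesics for every fixed $v\in\mathbb{R}^n$. Substituting $x^i(t)=tv^i$ into the geodesic equation $\ddot x^i+\Gamma^i_{jk}(x)\dot x^j\dot x^k=0$ yields the identity $\Gamma^i_{jk}(tv)\,v^jv^k=0$ for all $t$ and $v$. Evaluating at $t=0$ and using the symmetry of the Christoffel symbols in their lower indices gives $\Gamma^i_{jk}(0)=0$, which is equivalent to $g_{ij}(0)=\delta_{ij}$ together with $\partial_k g_{ij}(0)=0$. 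Differentiating the identity once in $t$ and setting $t=0$ gives the symmetrization condition $\partial_{(l}\Gamma^i_{jk)}(0)=0$; combining this with the expression for the Riemann curvature tensor in terms of first derivatives of the Christoffel symbols (valid at a point where the symbols themselves vanish) lets me solve for the second derivatives of the metric, producing
\[
g_{ij}(x)=\delta_{ij}-\tfrac{1}{3}\sum_{k,l}R_{ikjl}(0)\,x^kx^l+\mathrm{O}(\|x\|^3),
\]
where $R_{ikjl}(0)$ are the components of the Riemann tensor at $p$ in these coordinates. This is the heart of the argument and the step demanding the most care.

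Next I would compute the determinant. Writing $g(x)=I+A(x)$ with $A_{ij}(x)=-\tfrac13\sum_{k,l}R_{ikjl}(0)\,x^kx^l+\mathrm{O}(\|x\|^3)$, the entries of $A$ are $\mathrm{O}(\|x\|^2)$, so the expansion $\det(I+A)=1+\tr A+\mathrm{O}(\|A\|^2)$ gives
\[
\det(g(x))=1-\tfrac13\sum_{i,k,l}R_{ikil}(0)\,x^kx^l+\mathrm{O}(\|x\|^3).
\]
Since $g^{ij}(0)=\delta^{ij}$, the contraction $\sum_i R_{ikil}(0)=R_{kl}(0)$ is precisely the Ricci tensor in the sign convention implicit in the statement, yielding $\det(g(x))=1-\tfrac13\sum_{k,l}R_{kl}(0)\,x^kx^l+\mathrm{O}(\|x\|^3)$.

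Finally I would take the square root using $\sqrt{1+u}=1+\tfrac{u}{2}+\mathrm{O}(u^2)$ with $u=\mathrm{O}(\|x\|^2)$, which halves the quadratic coefficient and produces
\[
\sqrt{\det(g(x))}=1-\tfrac16\sum_{k,l}R_{kl}(0)\,x^kx^l+\mathrm{O}(\|x\|^3),
\]
as claimed; the neighborhood $V$ can be taken to be any normal ball on which the expansion holds and $g$ stays positive definite. The hard part will be the first step alone: correctly extracting the quadratic term of $g_{ij}$ and tracking the index symmetries and curvature sign convention so that the contraction delivers exactly the Ricci tensor with coefficient $-1/3$. Once that coefficient is pinned down, the determinant and square-root expansions are routine and automatically give the factor $-1/6$.
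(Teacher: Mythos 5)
Your proposal is correct, but there is nothing in the paper to compare it against: the paper states Proposition~\ref{prop: Riemann normal coords} \emph{without proof}, treating it as a classical fact to be invoked in the proof of Theorem~\ref{th: Laplace Riemannian orientable closed}. What you give is the standard derivation going back to Riemann, and all three steps are sound. The geodesic identity $\Gamma^i_{jk}(tv)\,v^jv^k=0$ yields $\Gamma^i_{jk}(0)=0$ and, upon differentiation in $t$, the vanishing of the symmetrized derivatives $\partial_{(l}\Gamma^i_{jk)}(0)$; combining this with the expression for the curvature at a point where the Christoffel symbols vanish, and with metric compatibility (which converts first derivatives of $\Gamma$ into second derivatives of $g$), a short elimination gives $\partial_k\partial_l g_{ij}(0)=-\tfrac{1}{3}\bigl(R_{ikjl}(0)+R_{iljk}(0)\bigr)$, i.e.\ exactly the quadratic term $-\tfrac{1}{3}R_{ikjl}(0)x^kx^l$ you assert for $g_{ij}$. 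The determinant and square-root steps are routine and your error bookkeeping is right: the neglected terms in $\det(I+A)=1+\tr A+\mathrm{O}(\|A\|^2)$ and in $\sqrt{1+u}=1+u/2+\mathrm{O}(u^2)$ are $\mathrm{O}(\|x\|^4)$, hence absorbed into the stated $\mathrm{O}(\|x\|^3)$, and the contraction $\sum_i R_{ikil}(0)=R_{kl}(0)$ is precisely the Ricci tensor because $g^{ij}(0)=\delta^{ij}$.

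One imprecision worth fixing: $\Gamma^i_{jk}(0)=0$ is \emph{not} equivalent to ``$g_{ij}(0)=\delta_{ij}$ together with $\partial_k g_{ij}(0)=0$''. By metric compatibility it is equivalent to $\partial_k g_{ij}(0)=0$ alone; the normalization $g_{ij}(0)=\delta_{ij}$ does not follow from the geodesic equation, but from the definition of normal coordinates, which identifies $T_pM$ with $\mathbb{R}^n$ via an orthonormal basis before composing with the exponential map. Both facts do hold in normal coordinates, so nothing downstream is affected, but the logical attribution should be corrected in a full write-up.
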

Using Proposition~\ref{prop: Riemann normal coords}, we can now proceed to the proof of Theorem~\ref{th: Laplace Riemannian orientable closed}.
\begin{proof}[Proof of Theorem~\ref{th: Laplace Riemannian orientable closed}] 
Take $\mathfrak{A}=\{U_{k}\}_{k=1}^{K}$, $K<\infty$ (since $M$ is compact we can take a subcover if necessary so that it is finite), an open cover of $M$ associated with positively oriented charts $\varphi_k:U_k\to\mathbb{R}^n$ and let $\{f_k\}$ denote a partition of unity subordinate to $\mathfrak{A}$. Then,
\begin{align*}
\int_{M}e^{Nf}dV_g &=\int_{M}\sum_{k=1}^{K}f_ke^{Mf}dV_g\\
&=\sum_{k=1}^{K}\int_{U_k}f_k e^{Nf}dV_g\\
&=\sum_{k=1}^{K}\int_{\varphi_k(U_k)}f_k\circ\varphi_k^{-1} e^{Nf\circ\varphi_k^{-1}}(\varphi_k^{-1})^{*}dV_g.
\end{align*} 
The functions $f_k$, by definition, satisfy $1\geq f_k(p)\geq 0$ for every $p\in M$. Fix a $k\in \{1,...,K\}$. Suppose $p_0\notin U_k$. Since $M-U_k$ is a closed subset of a compact space it is compact. Therefore $f$ reaches a maximum value say $f(p_0)-\eta$ in $M-U_k$, for some $\eta>0$. Therefore,
\begin{align*}
0\leq \int_{U_k}f_k e^{Nf}dV_g &\leq \int_{M}e^{f}e^{(N-1)(f(p_0)-\eta)}dV_g\\
&\leq e^{(N-1)(f(p_0)-\eta)}\int_{M}e^{f}dV_g.
\end{align*}
If we divide both sides by $\left(\frac{2\pi}{N}\right)^{n/2}\frac{e^{Nf(p_0)}}{\sqrt{\det(\hess_{p_0}(f))}}\left[1+\frac{1}{6N}\textnormal{tr}(\hess_{p_0}(f)^{-1}R_{p_0})\right]$ and take the limit $N\to\infty$, it is then clear that this contribution will vanish and, thus, have no role. For simplicity, and without loss of generality, we assume that $p_0$ is in $U_k$ for a single $k$ only. Then, we need to focus on
\begin{align*}
\int_{U_k}f_k e^{f}dV_g=\int_{\varphi_k(U_k)}f_k\circ\varphi_k^{-1} e^{Nf\circ\varphi_k^{-1}}(\varphi_k^{-1})^{*}dV_g.
\end{align*}
We assume, without loss of generality, $\varphi_k=(x^1,...,x^n)$ to be a normal coordinate system centered $p$ and by abuse of notation denote $f\circ\varphi_k^{-1}$ by simply $f$ and $f_k\circ\varphi_k^{-1}$ by simply $f_k$. The image $\varphi_k(U_k)$ is an open set in $\mathbb{R}^n$, which we will denote $V$. We are then dealing with the integral
\begin{align*}
\int_{V}f_{k}(x)e^{Nf(x)}\sqrt{\det g(x)}dx.
\end{align*}
We can take a smaller open subset $W\subset V$, with $\varphi_k(p_0)=0\in W$, where $f_k|_{W}=1$. Notice that over $V-W$, since the maximum of $f$ is reached for $0\in W$, we have, quite similarly to what we did above,
\begin{align*}
\int_{V-W}f_{k}(x)e^{Nf(x)}\sqrt{\det g(x)}dx &\leq \int_{V}f_{k}(x)e^{f(x)}e^{(N-1)(f(0)-\eta)}\sqrt{\det g(x)}dx \\
&=e^{(N-1)(f(0)-\eta)} \int_{V}f_{k}(x)e^{f(x)}\sqrt{\det g(x)}dx,
\end{align*}  
where $\eta>0$ exists since $f(p_0)>f(p)$ for all $p\in M$, and the inequality follows from the integral being positive. When we divide both sides by $\left(\frac{2\pi}{N}\right)^{n/2}e^{Nf(p_0)}\!\!\frac{\sqrt{\!\det(g_{p_0})}}{\sqrt{\det(\hess_{p_0}(f))}}\left[\!1\!+\!\frac{1}{6N}\textnormal{tr}(\hess_{p_0}(f)^{-1}R_{p_0})\!\right]$ it is clear that this term goes to zero in the limit $N\to\infty$. It is then enough to consider the integral
\begin{align*}
\int_{B_{\delta}(0)}e^{Nf(x)}\sqrt{\det g(x)}dx,
\end{align*}
where we have replaced $W$ by a ball $B_{\delta}(0)$ containing $\varphi_k(p_0)=0$. Next, by choosing $\delta$ sufficiently small, we can use Proposition~\ref{prop: Riemann normal coords} to write:
\begin{align*}
\sqrt{\det g(x)}=1-\frac{1}{6}\sum_{i,j=1}^{n}R_{ij}(0)x^{i} x^{j} +\text{O}(||x||^3).
\end{align*}
By the identifications $T_{p_0}M\cong\mathbb{R}^n$ provided by normal coordinates, this can be reformulated as
\begin{align*}
\sqrt{\det g(x)}= 1-\frac{1}{6}x^t R_{p_0}x +||x||^3g(x),
\end{align*}
where we see $R_{p_0}$ as an $n\times n$ matrix and $g(x)$ is some function with the property $g(x)\to 0$ as $x\to 0$. By compactness of $\overline{B_{\delta}(0)}$, there exists a constant $C>0$, such that
\begin{align*}
\left|\sqrt{\det g(x)}- (1-\frac{1}{6}x^t R_{p_0}x)\right|\leq C||x||^3.
\end{align*} 
We can replace $||x||^3$ on the right hand side by the absolute value of an arbitrary polynomial in the $x^i$'s whose first term is of degree $3$, let us call it $P(x)=\sum_{i_1,i_2,i_3=1}^{n} a_{i_1i_2i_3}x^{i_1}x^{i_2}x^{i_3}+...$, with an appropriate new choice for $C$.
Therefore,
\begin{align*}
\left|\int_{B_{\delta}(0)}e^{Nf(x)}\sqrt{\det g(x)}dx -\int_{B_{\delta}(0)}e^{Nf(x)}(1-\frac{1}{6}x^t R_{p_0}x)dx\right|&\leq \int_{B_{\delta}(0)}e^{Nf(x)}\left|\sqrt{\det g(x)} -(1-\frac{1}{6}x^t R_{p_0}x)\right|dx \\
&\leq C\int_{B_{\delta}(0)} e^{Nf(x)}|P(x)|dx.
\end{align*}
Next, we let $A=-\text{Hess}_{p_0}(f)$ and perform the change of variables according to $y=\sqrt{N}A^{1/2}x=:F(x)$. Notice that $F$, as defined, defines a diffeomorphism of open sets in $\mathbb{R}^n$, where we see $A$ as a linear endomorphism of $\mathbb{R}^n$ using the orthogonal normal coordinates. It is clear that as $N\to\infty$ the image under $F$ of $B_{\delta}$ becomes $\mathbb{R}^n$. We then have
\begin{align*}
C\int_{B_{\delta}(0)} e^{Nf(x)}|P(x)| dx= C\det(N^{-1/2}A^{-1/2})\int_{F(B_{\delta}(0))} e^{Nf\circ F^{-1}(y)}|P\circ F^{-1}(y)|dy.
\end{align*}
Now $Nf\circ F^{-1}(y)= Nf(0)-1/2 ||y||^2+ \mbox{O}(N^{-1/2}||y||^3)$. As $N$ grows larger, all we need to do is the integral over $\mathbb{R}^n$ of $ e^{Nf\circ F^{-1}}|P\circ F^{-1}|$, which by Laplace's approximation in $\mathbb{R}^n$, see Remark~\ref{rmk:Laplace}, is proportional to evaluating $|P|$ at $0$, which yields zero. Therefore,
\begin{align*}
&\lim_{N\to\infty}\int_{B_{\delta}(0)}e^{Nf(x)}\sqrt{\det g(x)}dx=\lim_{N\to\infty}\int_{B_{\delta}(0)}e^{Nf(x)}(1-\frac{1}{6}x^t R_{p_0}x)dx.
\end{align*}
Moreover, for finite $N$,
\begin{align*}
\int_{B_{\delta}(0)}e^{Nf(x)}(1-\frac{1}{6}x^t R_{p_0}x)dx=&\det(N^{-1/2}A^{-1/2})e^{Nf(0)} \\
&\times\int_{F(B_{\delta}(0))}e^{N f\circ F^{-1}(y)}(1-\frac{1}{6N}y^tA^{-1/2}R_{p_0}A^{-1/2}y)dy.
\end{align*}
In the large $N$ limit, we just need to evaluate the Gaussian integral, yielding
\begin{align*}
\left(\frac{2\pi}{N}\right)^{n/2}\frac{e^{Nf(0)}}{\sqrt{\det(\hess_{p_0}(f))}}\left[1+\frac{1}{6N}\tr(\hess_{p_0}(f)^{-1}R_{p_0})\right].
\end{align*}
We then get
\begin{align*}
\lim_{N\to\infty}\frac{\int_{M} e^{Nf}dV_g }{\left(\frac{2\pi}{N}\right)^{n/2}\frac{e^{Nf(p_0)}}{\sqrt{\det(\hess_{p_0}(f))}}\left[1+\frac{1}{6N}\tr(\hess_{p_0}(f)^{-1}R_{p_0})\right]}=1.
\end{align*}
Note that the identification of $-\hess_{p_0}(f)$ as a linear map implies the use of the metric $g_{p_0}$ at $T_{p_0}M$, which in the orthogonal normal coordinates is just the identity matrix. Therefore, the invariant form of $\sqrt{\det(\hess_{p_0}(f))}$ is $\sqrt{\det(\hess_{p_0}(f))}/\sqrt{\det(g_{p_0})}$, where now $\hess_{p_0}(f)$ and $g_{p_0}$ are understood as the bilinear forms $\hess_{p_0}(f)$ and $g_{p_0}$ expressed as matrices in arbitrary, but of course the same, coordinates. This yields the final result:
\begin{align*}
&\lim_{N\to\infty}\frac{\int_{M} e^{Nf}dV_g }{\left(\frac{2\pi}{N}\right)^{n/2}e^{Nf(p_0)}\frac{\sqrt{\det(g_{p_0})}}{\sqrt{\det(\hess_{p_0}(f))}}\left[1+\frac{1}{6N}\tr(\hess_{p_0}(f)^{-1}R_{p_0})\right]}=1.
\end{align*}
\end{proof}
\begin{remark} One can extend the results to the paracompact case, i.e., $(M,g)$ an arbitrary oriented Riemannian manifold without boundary, with the additional assumptions that $\int_{M}e^{Nf}dV_g<\infty$ for some finite $N$ and that $f(p_0)$ is the maximum value $f$ attains $M$ (assumptions which are immediate for compact $M$).
\end{remark}

\section*{Acknowledgment}

BM and PM thank the support  from SQIG -- Security and Quantum Information Group. BM, PM and AC  thanks the Funda\c{c}\~ao para a Ci\^{e}ncia e a Tecnologia (FCT) project UID/EEA/50008/2020, and European funds, namely H2020 project SPARTA. BM, PM and AC acknowledge PREDICT PTDC/CCI-CIF/29877/2017 funded by FCT. We also acknowledge J. Mour\~ao e J. P. Nunes for valuable discussions concerning the Laplace formula in the context of manifolds. 
We also acknowledge discussions with colleagues from the Electric Engineering department concerning the applications of manifolds to Information Theory.

\ifCLASSOPTIONcaptionsoff
  \newpage
\fi

\bibliographystyle{unsrt}
\bibliography{bib}

%
%

\end{document}